\documentclass[a4paper]{amsart}
\usepackage{amsmath, amsfonts,amsthm,amssymb,amscd, verbatim,graphicx,color,multirow,booktabs, caption, bm,chngcntr,adjustbox,longtable,mathtools,microtype, xfrac}
\usepackage[top = 78pt, bottom = 72pt, inner=108pt, outer=108pt]{geometry}
\usepackage{hyperref}
\usepackage[dvipsnames, table]{xcolor}
\definecolor{LinkBlueSlate}{HTML}{2F4D73}
\hypersetup{colorlinks=true, linkcolor=Maroon, citecolor=OliveGreen, urlcolor=LinkBlueSlate}
\usepackage{tikz, tikz-cd, quiver}

\usepackage[capitalise,noabbrev]{cleveref}
\usepackage[shortlabels]{enumitem}

\baselineskip=13pt 

\usepackage{todonotes}

\newcommand{\g}{\mathrm{girth}}
\newcommand{\len}{\mathrm{length}}
\newcommand{\Aut}{\mathrm{Aut}}
\newcommand{\B}{\mathbf{B}}
\newcommand{\Z}{\mathbb{Z}}
\newcommand{\F}{\mathbb{F}}

\newcommand{\M}{\mathcal{M}}

\newtheorem{thm}{Theorem}[]
\newtheorem{lemma}[thm]{Lemma}

\newtheorem{proposition}[thm]{Proposition}
	
\newtheorem{thmx}{Theorem}[]

\newtheorem{corx}[thmx]{Corollary}

\theoremstyle{definition}

\newtheorem{construction}[thm]{Construction}
\newtheorem{question}[thmx]{Question}

\title{Flexible $3$-valent graphs of arbitrary girth}

\author[M.~Barbieri]{Marco Barbieri}
\address{Fakulteta za Matematiko in Fiziko, Univerza v Ljubljani, 1000 Jadranska, 1000 Ljubljana, Slovenia} 
\email{marco.barbieri@fmf.uni-lj.si}

\author[A.~Zozaya]{Andoni Zozaya} 
\address{Department of Statistics, Computer Science and Mathematics, Public University of Navarra (UPNA) \& Institute for Advanced Materials and Mathematics (INAMAT$^2$), \linebreak Arrosadia Campus, 31006 Pamplona, Spain}
\email{andoni.zozaya@unavarra.es}

\subjclass[2020]{primary: 20B25; secondary: 05C12, 20E05.}
\keywords{Flexible, vertex-transitive, $3$-valent, amalgamated product}

\title{Flexible $3$-valent graphs of even girth}

\thanks{\textit{Funding:} The first author is supported by the Slovenian Research Agency grant J1-50001 and research programme P1-0222. He is also a member of the GNSAGA INdAM research group and gratefully acknowledges its support. The second author is supported by Spanish Government, grant PID2020-117281GB-I00 (partly funded with ERDF), and by the Public University of Navarra, project 244 \textit{Álgebra. Aplicaciones}. The work on this paper began while the first author was visiting Prof.\ Primo\v{z} Poto\v{c}nik at IMFM in Ljubljana, and the second author was employed at the University of Ljubljana.}

\begin{document}	
	
    \begin{abstract}
        We prove the existence of a connected flexible $3$-valent vertex-transitive graph of girth $2\ell$ for every integer $\ell$. We also give a constructive proof if $\ell$ is prime.
    \end{abstract}
	
    \maketitle

\section{Introduction}\label{sec:intro}

Vertex-transitive $3$-valent graphs fall into three classes according to the number of orbits that their automorphism group induces on the edge set: \emph{arc-transitive graphs} have a single edge orbit, \emph{Cayley graphs} have three, and the remaining case has two. The former two families have been widely studied, while the last family is usually the harshest obstacle while trying to obtain general results for vertex-transitive $3$-valent graphs (for some recent examples of this, we suggest~\cite{BarbieriGrazianSpiga2025,MaSc,   PS21, PSV15, PTV24, Spiga2014}). In this setting, a connected vertex-transitive $3$-valent graph is said to be \emph{flexible} if its automorphism group has exactly two edge-orbits and every vertex-stabiliser has order at least $4$, or, equivalently, each vertex-stabiliser is a noncyclic $2$-group.\footnote{The difficulty in studying \(3\)-valent graphs with two edge orbits lies in the fact that the order of a vertex stabilizer can be arbitrarily large. In contrast, when the stabilizers are cyclic of order \(2\), their behaviour can be completely determined by their action on the neighbourhood of the fixed vertex. This is the reason for the exclusion in the definition of flexible.}

A well-studied parameter in graph theory is the \emph{distinguishing number} of a graph, defined as the minimum number of colours that must be assigned to the vertices of the graph so that no nontrivial automorphism preserves the colouration (see~\cite{AlbertsonCollin}). With only finitely many exceptions, two colours suffice for all $3$-valent vertex-transitive graphs ~\cite{HIKST}. This raises a natural question: \emph{what is the minimum number of vertices which need to be coloured to break all symmetries?} The parameter capturing this idea is called the \emph{$2$-distinguishing cost} of the graph, and it has been introduced in~\cite{Boutin}. It is immediate that the cost is \(1\) in the Cayley case (see~\cite{Sabidussi}), while for arc-transitive cubic graphs it is bounded between \(2\) and \(5\) (see~\cite{Tutte}).  The flexible case has only recently begun to be analysed~\cite{Ademir,ILTW}, and current evidence suggests that, for flexible cubic graphs, the $2$-distinguishing cost is governed by the girth. This led them to pose the following~\cite[Question~3.6]{Ademir}: \emph{does there exist a connected flexible $3$-valent vertex-transitive graph of girth $g$ for every positive integer $g$?} We answer this question in the affirmative for all even values of \(g\). 

\begin{thmx}\label{thm}
    For every integer $\ell$, there exists a finite connected vertex-transitive $3$-valent and flexible graph $\Gamma_\ell$ whose girth is $2\ell$.
\end{thmx}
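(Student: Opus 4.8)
The plan is to obtain $\Gamma_\ell$ as a finite quotient of a highly symmetric infinite $3$-valent graph in which the $2\ell$-cycles are already present, and then to control the girth, vertex-transitivity and flexibility as one descends to the quotient. Let $\mathcal{T}_\ell$ be the infinite $3$-valent graph made of a disjoint union of $2\ell$-cycles (its \emph{blue} $2$-factor) joined by a perfect matching of \emph{red} edges in a tree pattern; equivalently, $\mathcal{T}_\ell$ is the Cayley graph of $\Z_{2\ell}\ast\Z_2=\langle a,b\mid a^{2\ell},b^2\rangle$ with connection set $\{a,a^{-1},b\}$ --- the blue edges being the cosets of $\langle a\rangle$ and the red edges the cosets of $\langle b\rangle$ --- or, again equivalently, the truncation of the $2\ell$-regular tree. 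Since the free product admits no relation mixing $a$ and $b$, the only cycles of $\mathcal{T}_\ell$ are its blue $2\ell$-cycles; in particular $\mathcal{T}_\ell$ has girth exactly $2\ell$ and every red edge is a bridge.

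The second step is to choose a vertex-transitive subgroup $\widehat{G}\le\Aut(\mathcal{T}_\ell)$ that preserves the blue/red colouring and whose vertex stabilisers are a fixed noncyclic $2$-group of order at least $4$. A natural attempt takes the vertex stabiliser to be $\Z_2\times\Z_2$, one generator reflecting the blue cycle through the fixed vertex and the other fixing a neighbourhood of that vertex pointwise but reflecting a blue cycle further out, both extended over the hanging subtrees using the homogeneity of $\mathcal{T}_\ell$. The crucial structural point is that, upon collapsing each blue cycle to a point, $\widehat{G}$ acts on the resulting $2\ell$-regular tree with \emph{finite} vertex and edge stabilisers; hence $\widehat{G}$ is the fundamental group of a finite graph of finite groups, i.e.\ (since $\widehat{G}$ inverts the red edges) an amalgamated product $\widehat{G}\cong\widehat{G}_C\ast_{\widehat{G}_v}\widehat{G}_e$, where $\widehat{G}_C,\widehat{G}_v,\widehat{G}_e$ are the finite stabilisers in $\widehat{G}$ of a blue cycle, a vertex, and a red edge respectively. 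In particular $\widehat{G}$ is finitely generated and virtually free, hence residually finite and virtually torsion-free.

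Now I would pass to a finite quotient. Because balls in $\mathcal{T}_\ell$ are finite, residual finiteness together with virtual torsion-freeness yields a finite-index normal subgroup $N\trianglelefteq\widehat{G}$ that is torsion-free (so acts freely on $\mathcal{T}_\ell$) and for which the covering projection $\pi\colon\mathcal{T}_\ell\to\Gamma_\ell:=\mathcal{T}_\ell/N$ is injective on every ball of radius, say, $\ell+1$. Then $\Gamma_\ell$ is a finite connected $3$-valent graph on which $\widehat{G}/N$ acts vertex-transitively; the blue $2\ell$-cycles remain cycles of length $2\ell$ while no shorter cycle is created, so the girth of $\Gamma_\ell$ is exactly $2\ell$; and since $N$ is torsion-free the stabiliser of a vertex in $\widehat{G}/N$ is isomorphic to $\widehat{G}_v\cong\Z_2\times\Z_2$.

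It remains to verify flexibility. The group $\Aut(\Gamma_\ell)$ contains $\widehat{G}/N$, which has two edge-orbits and preserves the colouring, so $\Aut(\Gamma_\ell)$ has at most two edge-orbits; and it is not edge-transitive, because ``lying on a cycle of length $2\ell$'' is an automorphism-invariant property that holds for the blue edges but fails for the red ones --- a $2\ell$-cycle through a red edge of $\Gamma_\ell$ would lift, by ball-injectivity of $\pi$, to a cycle through a bridge of $\mathcal{T}_\ell$. Hence $\Aut(\Gamma_\ell)$ has exactly two edge-orbits, and since its vertex stabilisers have order at least $4$ the graph $\Gamma_\ell$ is flexible, as required. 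The step I expect to be the main obstacle is the construction of $\widehat{G}$ in the second paragraph: one must genuinely produce a vertex-transitive group of automorphisms of $\mathcal{T}_\ell$ with prescribed small noncyclic $2$-group point stabilisers, identify it explicitly as an amalgam of finite groups so that the residual-finiteness argument applies, and confirm that passing to the quotient neither merges the two edge-orbits nor shrinks the point stabilisers. When $\ell$ is prime, this amalgam together with a suitable normal subgroup $N$ can be exhibited explicitly, which yields the constructive strengthening of the theorem.
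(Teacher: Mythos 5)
Your overall strategy is sound and genuinely different from the paper's. The paper works on the $4$-valent side: it lets $D_4 *_{C_2}(C_2\times C_2)$ act on the $4$-valent tree (which has no cycles at all) and must therefore \emph{create} the girth cycles in the finite quotient; this forces its hardest step, namely producing a finite-index normal subgroup containing the specific element $(za^2)^{\pm 2\ell}$ of length $4\ell$ while meeting the ball of radius $4\ell+1$ in nothing else, which in turn requires the residual finiteness of the one-relator group $\langle x,y\mid (xy^{-1})^\ell\rangle$ (Allenby--Tang) and a length analysis in $C_\ell * C_\ell * C_\infty$; only afterwards is the $4$-valent quotient split into a $3$-valent graph. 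You instead build the $2\ell$-cycles into the infinite model $\mathcal{T}_\ell$ itself, so that on the quotient side you only need plain residual finiteness plus virtual torsion-freeness of a virtually free group together with the standard ``large minimal displacement'' argument; this bypasses the paper's Section 4 entirely. Your girth computation, the two-edge-orbit argument via the invariant property ``lies on a girth cycle'', and the preservation of the order-$4$ stabiliser under a torsion-free quotient are all correct as stated.

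The one genuine gap is exactly the one you flag: the group $\widehat{G}$ is never actually defined. Describing two automorphisms and ``extending them over the hanging subtrees using homogeneity'' does not produce a group acting on the collapsed $2\ell$-regular tree with finite stabilisers --- a careless extension lands you in the full (uncountable) colour-preserving automorphism group. The logic must be reversed: define $\widehat{G}$ as an explicit amalgam $G_C *_{G_v} G_e$ of finite groups and then verify that it acts on $\mathcal{T}_\ell$. Moreover, the choice is delicate. The involution $z\in G_v$ that fixes the blue cycle through the base vertex pointwise lies in the kernel of the local action of $G_C$ on that cycle; if $G_e$ centralises $z$ (e.g.\ $G_e\cong C_2^3$), then $\langle z\rangle$ is normal in both factors, $z$ acts trivially on all of $\mathcal{T}_\ell$, the effective vertex stabiliser collapses to a cyclic group of order $2$, and flexibility fails. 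A choice that does work is $G_C = D_{2\ell}\times\langle z\rangle$, $G_v=\langle\sigma\rangle\times\langle z\rangle$ with $\sigma$ a reflection, and $G_e$ a dihedral group of order $8$ containing $G_v$ as its $C_2\times C_2$ with $\sigma$ central and $z$ non-central: conjugation by the extra element of $G_e$ sends $z$ to $\sigma z$, so $z$ acts as a reflection on the neighbouring blue cycle, the action of $\widehat G$ on $\mathcal{T}_\ell$ is faithful with vertex stabiliser $C_2\times C_2$, and the rest of your argument goes through for every $\ell\ge 2$. Note also that your route, unlike the paper's explicit free-subgroup computation, gives no effective bound on $|V\Gamma_\ell|$, so it does not recover the quantitative statements for prime $\ell$.
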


The analogous existence problems in the one- and three-edge-orbit families have been addressed in~\cite{arc-transitive} and in~\cite{JS11}, respectively. In contrast, examples of such flexible graphs are elusive: $10$ was the largest girth for which a flexible graph was known. In fact, all flexible graphs of girth at most $6$ are known, as part of the wider classification of all $3$-valent vertex-transitive graphs of girth at most $6$, obtained in~\cite{EibenJajcaySparl, VidaliPotocnik}. Further examples of girth up to $10$ can be found in the census of $3$-valent vertex-transitive graphs built in~\cite{PotocnikSpigaVerret2013}. (To find them, we used the \textsf{GAP} package \texttt{GraphSym}~\cite{Rhys}.\footnote{For girth $7$, the smallest flexible graph has $98$ vertices and database number $13$. For girth $8$, there are two examples on $64$ vertices, with database numbers $4$ and $6$. For girth $9$, there are three graphs on $486$ vertices, whose database numbers are $56$, $59$, and $71$. Finally, for girth $10$, there is a unique graph on $144$ vertices database number $68$.})

In our proof of \cref{thm}, girth cycles necessarily appear with even length (see the proof of \cref{lemma:finiteIndexNormal}). As a consequence, our argument does not extend to the case of odd girths. This naturally leads us to pose the following question.
\begin{question}
    Can we construct an explicit example of a finite connected vertex-transitive $3$-valent and flexible graph whose girth is odd and exceeds $11$?
\end{question}
    
Furthermore, our proof of \cref{thm} is not in general constructive, and thus we have no control on the number of vertices of $\Gamma_\ell$. However, can adapt the proof to make it constructive if $\ell$ is a prime, thus obtaining an upper bound on the number of vertices of $\Gamma_\ell$. Our method relays on finding an explicit finite index subgroup of appropriate girth\footnote{The girth of a normal subgroup $N \unlhd F$ is the girth of the Cayley graph of $F/N$ with respect to the canonical generators.} of the free group. (The study of girth of subgroup is an area of independent interest). 

\begin{thmx}
\label{cor: p-power-nilpotent}
    Let $F_k = \langle x_1, \dots, x_k \rangle $ be the free group of rank $k$, let $\ell$ be a prime number, let $\gamma_{\ell+1}(F_k)$ be the $\ell$-th term of the lower central series of $F_k$, and let $F_k^\ell$ be the group generated by $\ell$-powers. Then, for every $w \in F_k^\ell \gamma_{\ell}(F_k)$, $\len(w) \geq \ell$, and equality is attained if, and only if, $w = x_{i}^{\pm \ell}$, for some index $i \in \{ 1, \dots, k \}$.
\end{thmx}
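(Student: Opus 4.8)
The plan is to study the subgroup $N:=F_k^\ell\gamma_\ell(F_k)$ through the Magnus embedding over the field $\F_\ell$. Let $A=\F_\ell\langle\langle X_1,\dots,X_k\rangle\rangle$ be the ring of formal power series in $k$ non-commuting variables, let $I=(X_1,\dots,X_k)$ be its augmentation ideal, and let $\mu\colon F_k\to A^\times$ be the homomorphism with $\mu(x_i)=1+X_i$; this is well defined since $1+X_i$ is a unit, with inverse $1-X_i+X_i^2-\cdots$. Everything reduces to two facts: \emph{(i)} $\mu(N)\subseteq 1+I^\ell$, and \emph{(ii)} an exact formula, modulo $\ell$, for one cleverly chosen coefficient of $\mu(w)$ read off from the \emph{syllables} of $w$. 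Note that no injectivity of $\mu$ is needed: the argument only passes from $w$ to $\mu(w)$.

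For \emph{(i)}: if $g\in F_k$ and $\mu(g)=1+a$ with $a\in I$, then $\mu(g^\ell)=(1+a)^\ell=1+a^\ell\in 1+I^\ell$, since $\ell\mid\binom{\ell}{j}$ for $0<j<\ell$ (here $\ell$ prime is used); hence $\mu(F_k^\ell)\subseteq 1+I^\ell$. Moreover $[1+I^s,1+I^t]\subseteq 1+I^{s+t}$, so induction along the lower central series gives $\mu(\gamma_\ell(F_k))\subseteq 1+I^\ell$. As $1+I^\ell$ is a subgroup of $A^\times$ and $N=\langle F_k^\ell,\gamma_\ell(F_k)\rangle$, we get $\mu(N)\subseteq 1+I^\ell$; equivalently, for $w\in N$ the homogeneous components of $\mu(w)-1$ of every degree in $\{1,\dots,\ell-1\}$ vanish. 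For \emph{(ii)}: write $w$ in reduced syllable form $w=x_{a_1}^{r_1}x_{a_2}^{r_2}\cdots x_{a_m}^{r_m}$, with $a_t\neq a_{t+1}$ and $r_t\in\Z\setminus\{0\}$, so that $\len(w)=\sum_t|r_t|$ and $\mu(w)=(1+X_{a_1})^{r_1}(1+X_{a_2})^{r_2}\cdots(1+X_{a_m})^{r_m}$. I claim the coefficient of the degree-$m$ monomial $X_{a_1}X_{a_2}\cdots X_{a_m}$ in $\mu(w)$ is $\overline{r_1}\,\overline{r_2}\cdots\overline{r_m}\in\F_\ell$. Indeed, expanding the product yields each monomial $X_{a_1}^{j_1}\cdots X_{a_m}^{j_m}$ with coefficient $\prod_t\binom{r_t}{j_t}$; if such a monomial equals $X_{a_1}X_{a_2}\cdots X_{a_m}$ then $\sum_t j_t=m$, so unless every $j_t=1$ some $j_t\geq 2$, which would place two equal letters in a row; but $X_{a_1}X_{a_2}\cdots X_{a_m}$ has no two consecutive equal letters, as consecutive syllables of $w$ use different generators. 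Hence only $(j_1,\dots,j_m)=(1,\dots,1)$ contributes, with coefficient $\prod_t\binom{r_t}{1}=\prod_t\overline{r_t}$.

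Now let $1\neq w\in N$ have syllable data as above. If $m\leq\ell-1$, fact \emph{(i)} forces the degree-$m$ component of $\mu(w)-1$ to vanish, so by \emph{(ii)} $\overline{r_1}\cdots\overline{r_m}=0$ in the field $\F_\ell$; thus $\ell\mid r_{t_0}$ for some $t_0$, whence $|r_{t_0}|\geq\ell$ and $\len(w)=\sum_t|r_t|\geq\ell$, with equality only if there is no further syllable, i.e.\ $m=1$ and $w=x_i^{\pm\ell}$. If instead $m\geq\ell$, then already $\len(w)=\sum_t|r_t|\geq m\geq\ell$, and equality would force $m=\ell$ with every $|r_t|=1$; but then $w\in N\subseteq F_k^\ell\gamma_2(F_k)$ makes every exponent sum $e_i(w)$ (the sum of the exponents of $x_i$ in $w$) divisible by $\ell$, while $\sum_i|e_i(w)|\leq\sum_t|r_t|=\ell$, so either all $e_i(w)=0$ or exactly one equals $\pm\ell$. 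The latter would force all $\ell\geq 2$ syllables to involve a single generator, contradicting $a_t\neq a_{t+1}$; the former would make $w$ a nontrivial reduced word of length $\ell$ in $\gamma_2(F_k)$, which is impossible since such words have even length greater than $2$ and $\ell$ is prime. So this case yields no element of length $\ell$, and altogether $\len(w)\geq\ell$, with equality precisely when $w=x_i^{\pm\ell}$ (which indeed lies in $F_k^\ell\subseteq N$ and has length $\ell$).

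The step I expect to be the crux, and where a naive approach stalls, is the choice of coefficient in fact \emph{(ii)}. If one instead reads the coefficient of the monomial recording the individual letters of $w$ in order, it is a signed sum of several terms that can vanish modulo $\ell$, so no single coefficient is visibly nonzero and the length bound is lost; passing to the syllable decomposition, where consecutive generators are forced to differ, is exactly what pins the coefficient of $X_{a_1}\cdots X_{a_m}$ down to the transparent value $\prod_t\overline{r_t}$, which is invertible unless some $\ell\mid r_t$. The only remaining loose end is the degenerate regime $m\geq\ell$ in the equality analysis, dispatched by the elementary exponent-sum and parity bookkeeping above.
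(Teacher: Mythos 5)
Your proof is correct and takes essentially the same route as the paper's: both rest on the Magnus embedding over $\F_\ell$, the containment of $F_k^\ell\gamma_\ell(F_k)$ in $1+I^\ell$, and reading off the coefficient of the degree-$m$ syllable monomial $X_{a_1}\cdots X_{a_m}$, which equals $\prod_t r_t \bmod \ell$ and is therefore nonzero when $m<\ell$ and no $r_t$ is divisible by $\ell$. The differences are only presentational: the paper cites Mann--Puchta and Lyndon--Schupp for the coefficient formula and the $\gamma_\ell$ containment (you prove both directly), packages the argument as producing a finite quotient of exponent $\ell$ and class $<\ell$ rather than working inside the power series ring, and handles the equality case by observing $w\notin\gamma_2(F_k)$ and passing to the abelianization, where you instead split on the number of syllables $m$.
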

\begin{corx}
\label{prop:bounds}
    Let $\ell$ be a prime number. Then
    \[ |V\Gamma_\ell| \le 16 \cdot N\left(3,2\ell\right) \,,\]
    where $N(3,2\ell)$ is the order of the largest $3$-generated finite group of exponent $2\ell$.
\end{corx}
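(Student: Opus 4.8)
The plan is to read the bound off the constructive proof of \cref{thm} (valid for $\ell$ prime), keeping careful track of indices. Recall that that proof builds $\Gamma_\ell$ from two ingredients: a fixed piece of $2$-local combinatorial data, independent of $\ell$ and of bounded size, assembled over the explicit finite-index subgroup $N_\ell := F_3^{\ell}\gamma_{\ell+1}(F_3) \le F_3$, whose girth \cref{cor: p-power-nilpotent} pins down --- each reduced element of $N_\ell$ of minimal length $\ell$ being some $x_i^{\pm\ell}$, which, after the length-doubling responsible for the even girth, closes up to a cycle of length $2\ell$ (cf.\ \cref{lemma:finiteIndexNormal}). Writing $H_\ell := F_3/N_\ell$ and letting $G_\ell$ be the finite group of which $\Gamma_\ell$ is realised as a coset graph, a direct count of vertices yields an identity
\[
    |V\Gamma_\ell| \;=\; [G_\ell : G_v] \;=\; c_\ell \cdot |H_\ell|, \qquad c_\ell \le 16 ,
\]
in which $c_\ell$ is an $\ell$-independent factor assembled from the fixed $2$-local data (in essence, the order of a vertex-stabiliser and the factor by which the construction inflates cycle lengths). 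The first step of the proof is to establish this identity: since $N_\ell$ is a verbal, hence fully invariant, subgroup of $F_3$, and $F_3$ sits inside the amalgam $\mathcal{A}$ underlying \cref{thm} as a normal subgroup of finite index, $N_\ell$ is automatically normal in $\mathcal{A}$; the order of $G_\ell = \mathcal{A}/N_\ell$ therefore factorises cleanly as $[\mathcal{A} : F_3] \cdot |H_\ell|$, and one checks that $c_\ell = [\mathcal{A} : F_3]/|G_v| \le 16$, uniformly in $\ell$.

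The second, and conceptually central, step is the estimate $|H_\ell| \le N(3,2\ell)$. Only three soft facts about $H_\ell = F_3/F_3^{\ell}\gamma_{\ell+1}(F_3)$ are needed. It is generated by the images of $x_1, x_2, x_3$, hence by at most three elements. Every $g \in H_\ell$ is the image of some $w \in F_3$, and $w^{\ell} \in F_3^{\ell} \le N_\ell$, so $g^{\ell} = 1$ and a fortiori $g^{2\ell} = 1$. Finally $H_\ell$ is a quotient of the finitely generated nilpotent group $F_3/\gamma_{\ell+1}(F_3)$ and has finite exponent, hence is finite, by the elementary fact that a finitely generated nilpotent group of finite exponent is finite (induction on the nilpotency class, the abelian case being immediate). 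Thus $H_\ell$ is a finite $3$-generated group in which every element has order dividing $2\ell$, so $|H_\ell| \le N(3,2\ell)$ by the very definition of $N(3,2\ell)$. Combining this with the first step gives $|V\Gamma_\ell| \le 16\,N(3,2\ell)$, which is the claim.

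I expect the only genuine friction to lie in the first step: one must describe the amalgam $\mathcal{A}$ underlying \cref{thm} --- the graph of finite groups governing flexible $3$-valent graphs with two edge-orbits --- precisely enough to evaluate $[\mathcal{A} : F_3]$ and $|G_v|$, and then verify that the order of a vertex-stabiliser, the contribution of the two edge-orbits, and the factor by which the construction inflates cycle lengths to force even girth each contribute only a bounded amount whose product never exceeds $16$. The second step, by contrast, is routine; and it is worth stressing that it does not invoke Zelmanov's solution of the restricted Burnside problem. Finiteness of $H_\ell$ is elementary, and \cref{cor: p-power-nilpotent} has already done the work of controlling the girth; Zelmanov's theorem enters only in guaranteeing that $N(3,2\ell)$ is itself finite, so that the stated inequality has content. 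Since $H_\ell$ is nilpotent, one could in fact replace $N(3,2\ell)$ throughout by the (elementarily finite, and far smaller) maximal order of a $3$-generated nilpotent group of class at most $\ell$ and exponent $\ell$.
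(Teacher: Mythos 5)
There is a genuine gap, and it sits exactly where you wave it away as bookkeeping. Your bound is read off the quotient by $N_\ell = F_3^{\ell}\gamma_{\ell+1}(F_3)$, but that subgroup cannot be the one underlying the construction of $\Gamma_\ell$: it contains $x_2^{\ell}=(za)^{2\ell}$ and $x_3^{\ell}=(za^3)^{2\ell}$ as well as $x_1^{\ell}=(za^2)^{2\ell}$, so it violates condition $(b)$ of \cref{lemma:finiteIndexNormal}, which demands that the only nontrivial elements of length $4\ell+1$ or less be $(za^2)^{\pm 2\ell}$. With $x_2^{\ell}$ in the kernel, the quotient $4$-valent graph acquires girth cycles through $\alpha$ containing the pair $\{\alpha_z,\alpha_{za}\}$ (the word $(za)^{2\ell}$ enters $\alpha$ through $D_4z$ and leaves through $D_4za$), so the argument in \cref{lemma:BassSerre} no longer forces $\Aut(\Delta_\ell)_\alpha$ to preserve the block system $\{\alpha_z,\alpha_{za^2}\},\{\alpha_{za},\alpha_{za^3}\}$; local imprimitivity, the invariant $2$-factor, and hence the splitting step all fail. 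The actual proof must therefore work with a subgroup that contains $x_1^{\ell}$ (and, for normality in $G=D_4*_{C_2}(C_2\times C_2)$, also $(x_2x_3^{-1})^{\ell}$, since conjugation by $a$ sends $x_1$ to $x_2^{-1}x_3$) but \emph{excludes} $x_2^{\ell}$ and $x_3^{\ell}$. The paper takes $M=\langle x_1^{\ell}, F_3^{2\ell}\gamma_\ell(F_3)\rangle^{G}\leq F_3^{\ell}\gamma_\ell(F_3)$, applies \cref{cor: p-power-nilpotent} to the larger group to control lengths, and then proves $x_2^{\ell},x_3^{\ell}\notin M$ by an evaluation homomorphism $F_3\to\mathbb{Z}$ killing $x_1$ and identifying $x_2$ with $x_3$, under which membership of $x_2^{\ell}$ would force $x_2^{\ell}\in\langle x_2^{2\ell}\rangle$. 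This exclusion step is the crux, not a routine index count, and it is also why the bound involves $N(3,2\ell)$ rather than $N(3,\ell)$: the ambient verbal subgroup must be $F_3^{2\ell}\gamma_\ell(F_3)$ (exponent $2\ell$), precisely so that $x_2^{\ell}$ and $x_3^{\ell}$ are not automatically swallowed.

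Your second step is fine as far as it goes ($F_3/F_3^{2\ell}\gamma_\ell(F_3)$ is $3$-generated, of exponent dividing $2\ell$, and finite since it is nilpotent torsion), and the factor $16=|G:F_3|$ is correctly identified; but the "conceptually central" and the "routine" parts are swapped relative to where the real difficulty lies.
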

Observe that $N(3,2\ell)$ is finite, as every finitely generated nilpotent torsion group is finite of bounded order (see, for instance,~\cite[Lemma~2.34]{CeSi}). Our proof of \cref{prop:bounds} does not rely on the primality of $\ell$. Hence, it can be generalized to arbitrary even girth $2\ell$ whether the following question has an affirmative answer.
\begin{question}
\label{question: general}
    Is \cref{cor: p-power-nilpotent} true for every positive integer $\ell$?
\end{question}

Last, we note that the bound from \cref{prop:bounds} is disproportionally high compared to the few examples we found in the census. Hence, we conclude by proposing the following problem.
\begin{question}
    What is the number of vertices of the smallest connected vertex-transitive $3$-valent and flexible graph of girth $2\ell$?
\end{question}

\subsection{Structure of the paper} \cref{sec:split,sec:BassSerre,sec:finiteIndex} contain the proof of \cref{thm} divided in various lemmas, while \cref{cor: p-power-nilpotent} and \cref{prop:bounds} are proved in \cref{sec: effective}.

We present here, in broad terms, the proof of \cref{thm}. The first step involves transforming the problem from the $3$-valent case to the $4$-valent case. This is achievable through the operation of splitting, which is explained in \cref{sec:split}. The advantage of the $4$-valent case lies in its easiness of handling with Bass--Serre theory. In fact, we can construct an infinite tree whose automorphism group is an amalgamated free product that encodes the structure of a $4$-valent graph where each vertex-stabilizer is isomorphic to the dihedral group of degree $4$. This is the subject of \cref{sec:BassSerre}. The task then becomes finding an appropriate normal subgroup $N$ of this amalgamated free product such that the $3$-valent split of the $4$-valent quotient graph possesses the properties prescribed by \cref{thm}. In rough terms, this means that almost all the elements in $N$ are long words with respect to the natural length of amalgamated products. The construction of this subgroup $N$ is addressed in \cref{sec:finiteIndex}.

\subsection{Acknowledgments}
We would like to thank Prof.~Ademir Hujdurović for posing us the question of whether there exist finite connected vertex-transitive $3$-valent and flexible graphs of arbitrary girth, for suggesting that the operation of splitting could aid in addressing this problem, and for sharing with us the preliminary version of~\cite{Ademir}. We would also like to thank Prof.~Montserrat Casals-Ruiz for helpful conversations.

\section{The operation of splitting}\label{sec:split}
    The operations of \emph{splitting} and \emph{merging} have been introduced in~\cite[Constructions~7 and~11]{PotocnikSpigaVerret2013} to create a framework for translating results from $3$-valent graphs with a perfect matching invariant under the action of their automorphism groups into results for $4$-valent graphs with a $2$-factor invariant under the action of their automorphism groups, and \textit{vice versa}. The fact that these operations are the inverse of one another (on both sides) has been proved in~\cite[Theorem~12]{PotocnikSpigaVerret2013} and~\cite[Theorem~2.9]{BarbieriGrazianSpiga2025}, and it suggests that we can reduce the proof of \cref{thm} to a problem on $4$-valent graphs with a $G$-invariant cycle decomposition. For our purposes, we only need to give a precise definition of the operation of splitting.

    \begin{construction}\label{con:splitting}
        Let $\Delta$ be a $4$-valent graph, and let $\mathcal{C}$ be a partition of $E\Delta$ into cycles. We build a $3$-valent graph, $\mathrm{s}(\Delta,\mathcal{C})$, whose vertex-set is
        \[ V\mathrm{s}(\Delta,\mathcal{C}) := \left\{(\alpha,\mathbf{c}) \in V\Delta \times \mathcal{C} \mid \alpha \in V\mathbf{c} \right\} \,\]
        and such that two vertices $(\alpha,\mathbf{c})$ and $(\beta,\mathbf{d})$ are declared adjacent if either $\mathbf{c}$ and $\mathbf{d}$ are distinct and $\alpha= \beta$, or $\mathbf{c}=\mathbf{d}$ and $\alpha$ and $\beta$ and are adjacent in $\mathbf{c}=\mathbf{d}$. The graph $\mathrm{s}(\Delta,\mathcal{C})$ is the \emph{split graph of the pair $(\Delta,\mathcal{C})$}.
    \end{construction}

    We need to point out some properties of \cref{con:splitting} that will be used in the following.
    
    Observe that, since $\Delta$ is $4$-valent, there are precisely two cycles in $\mathcal{C}$ passing through $\alpha$. Consequently, $\mathrm{s}(\Delta,\mathcal{C})$ is $3$-valent and
    \[ |V\mathrm{s}(\Delta,\mathcal{C})| = 2 |V\Delta| \,.\]
    
    Let $G\le \Aut(\Delta)$ be a group of automorphisms of $\Delta$. Note that, if $G$ fixes the edge-partition $\mathcal{C}$ setwise, then $G$ is a subgroup of $\Aut(\mathrm{s}(\Delta,\mathcal{C}))$. Moreover, if $G$ is arc-transitive on $\Delta$, then $G$ can swap the two cycles of $\mathcal{C}$ passing through each vertex. Hence, $G$ acts transitively on $V\mathrm{s}(\Delta,\mathcal{C})$. Observe that the $G$-invariance of $\mathcal{C}$ forces the action of any vertex-stabilizer on the neighbourhood of the fixed vertex to be imprimitive. We say that $G$ is \emph{locally-imprimitive}, for short.
    
    For any vertex $(\alpha,\mathbf{c}) \in V\mathrm{s}(\Delta,\mathcal{C})$, we observe that the vertex-stabilizer of $(\alpha,\mathbf{c})$ must contain the intersection of the vertex-stabilizer of $\alpha \in V\Delta$ and the setwise stabilizer of the cycle $\mathbf{c}$. In particular, whenever $G$ is arc-transitive on $\Delta$, the order of the vertex-stabilizers in $\mathrm{s}(\Delta,\mathcal{C})$ is half that of those in $\Delta$. Furthermore, by a connectedness argument, these groups are both $2$-groups (see, for instance,~\cite[Lemma~3.1]{Spiga2014}).

    We have collected all the data about \cref{con:splitting} we need to proceed.
    
   \begin{lemma}\label{lemma:girthSplit}
            Let $\Delta$ be a $4$-valent graph, and let $G$ be an arc-transitive locally-imprimitive group of automorphisms of $\Delta$ (or, equivalently, every vertex-stabilizer in $G$ is a $2$-group). Suppose that $\mathcal{C}$ is a $G$-invariant $2$-factor that contains girth cycles. Then
            \[ \g (\Delta) = \g \left( \mathrm{s}(\Delta, \mathcal{C})\right) \,. \]
        \end{lemma}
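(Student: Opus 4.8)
The plan is to establish the two inequalities $\g(\Delta) \le \g(\mathrm{s}(\Delta,\mathcal{C}))$ and $\g(\mathrm{s}(\Delta,\mathcal{C})) \le \g(\Delta)$ separately, by transporting cycles between the two graphs. For the first direction, I would start with a shortest cycle $\tilde{C}$ in the split graph $\mathrm{s}(\Delta,\mathcal{C})$ and project it to $\Delta$ via the natural map $\pi\colon (\alpha,\mathbf{c})\mapsto \alpha$. The edges of $\mathrm{s}(\Delta,\mathcal{C})$ are of two types: "cycle edges" (where $\mathbf{c}=\mathbf{d}$ and $\alpha\sim\beta$), which project to genuine edges of $\Delta$, and "transversal edges" (where $\alpha=\beta$ and $\mathbf{c}\ne\mathbf{d}$), which project to a single vertex. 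So $\pi(\tilde C)$ is a closed walk in $\Delta$ of length at most $|\tilde C| = \g(\mathrm{s}(\Delta,\mathcal{C}))$. If this closed walk is nontrivial it contains a cycle, giving $\g(\Delta)\le \g(\mathrm{s}(\Delta,\mathcal{C}))$; I would need to rule out the degenerate possibility that $\pi(\tilde C)$ traverses no edge at all (i.e.\ $\tilde C$ uses only transversal edges), but a vertex of $\mathrm{s}(\Delta,\mathcal{C})$ lies on exactly one transversal edge, so a cycle cannot consist solely of transversal edges — it has length $\le 2$, impossible in a simple graph. A more careful bookkeeping shows in fact that every maximal run of transversal edges in $\tilde C$ has length exactly $1$, so $\pi(\tilde C)$ is a closed walk with no backtracking at the contracted vertices, hence contains a cycle of length $\le |\tilde C|$.

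For the reverse inequality I would lift a girth cycle of $\Delta$ to the split graph. By hypothesis $\mathcal{C}$ contains girth cycles, so there is a cycle $C$ of length $\g(\Delta)$ that is entirely contained in one of the cycles $\mathbf{c}\in\mathcal{C}$; equivalently, $\mathbf{c}$ itself has length $\g(\Delta)$. Then the cycle edges of $\mathrm{s}(\Delta,\mathcal{C})$ joining consecutive vertices $(\alpha,\mathbf{c})$, as $\alpha$ runs around $\mathbf{c}$, form a cycle of length $\len(\mathbf{c}) = \g(\Delta)$ in $\mathrm{s}(\Delta,\mathcal{C})$. This shows $\g(\mathrm{s}(\Delta,\mathcal{C}))\le \g(\Delta)$, and combining the two inequalities finishes the proof.

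The role of the arc-transitive, locally-imprimitive hypothesis deserves a comment: it is what guarantees that a $G$-invariant $2$-factor can exist and that the split graph is itself vertex-transitive of the expected kind, but for the bare girth identity the essential input is just that $\mathcal{C}$ is a $2$-factor (so $\mathrm{s}(\Delta,\mathcal{C})$ is well-defined and $3$-valent) together with the assumption that $\mathcal{C}$ contains a girth cycle. I expect the only real subtlety to be the first direction — specifically, arguing cleanly that contracting the transversal edges of a shortest cycle $\tilde C$ in $\mathrm{s}(\Delta,\mathcal{C})$ cannot destroy all cyclic structure and cannot shorten things below $\g(\Delta)$. I would handle this by the run-length analysis sketched above: at a vertex $(\alpha,\mathbf{c})$ the two cycle-neighbours lie on $\mathbf{c}$ and the one transversal-neighbour lies on the other cycle through $\alpha$, so along $\tilde C$ transversal edges cannot occur consecutively, and $\pi$ therefore sends $\tilde C$ to a genuine (non-backtracking at contracted points) closed walk of the same or smaller length in $\Delta$, from which a short cycle is extracted. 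Everything else is a direct unwinding of \cref{con:splitting}.
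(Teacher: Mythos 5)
Your proposal is correct and follows essentially the same route as the paper: one inequality by lifting a girth cycle of $\mathcal{C}$ to a cycle of equal length in $\mathrm{s}(\Delta,\mathcal{C})$, the other by projecting a girth cycle of the split graph back to $\Delta$. Your run-length analysis of the transversal edges is in fact a more careful justification of the projection step than the paper's rather terse contradiction argument, so no gap remains.
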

    \begin{proof}
    Consider the bijection $\theta: V\Delta \times \mathcal{C} \to V\mathrm{s}(\Delta, \mathcal{C})$ that \cref{con:splitting} induces. Note that, for every $\mathbf{c} \in \mathcal{C}$, $\theta(V\mathbf{c},\mathbf{c})$ is a cycle in $\mathrm{s}(\Delta,\mathcal{C})$ with the same length as $\mathbf{c}$. Hence, we immediately obtain the inequality
    \[ \mathrm{girth} (\Delta) \ge \mathrm{girth} \left( \mathrm{s}(\Delta, \mathcal{C})\right) \,. \]
    
    Aiming for a contradiction, we suppose that
    \[ \mathrm{girth} (\Delta) > \mathrm{girth} \left( \mathrm{s}(\Delta, \mathcal{C})\right) \,. \]
    Let $\mathbf{d}$ be a girth cycle in $\mathrm{s}(\Delta, \mathcal{C})$. Since all the cycles in $\mathcal{C}$ are longer than $\mathbf{d}$, it follows that the vertices
    \[ V\theta^{-1}(\mathbf{d}) := \{ \alpha \in V\Delta \mid \theta(\alpha,\mathbf{c}) \in V\mathbf{d}, \hbox{ for some }\mathbf{c}\in \mathcal{C} \}\]
    form a cycle of $\Delta$ whose length is strictly less than $\mathrm{girth}(\Delta)$, a contradiction. Hence, the equality is attained, and the proof is complete.
    \end{proof}

As a consequence of \cref{lemma:girthSplit}, we can focus our attention on locally-imprimitive $4$-valent graphs.
    
\section{The \texorpdfstring{$4$}{4}-valent scenario}\label{sec:BassSerre}
In this section, our aim is to build, for every integer $\ell \ge 2$, a $4$-valent graph whose automorphism group is arc-transitive and locally-imprimitive, and whose girth is $2\ell$. We will make use of Bass--Serre theory: we refer to~\cite{DicksDunwoodyGroups,SerreTrees} for the undefined terminology.

Let $G$ be the amalgamated product $D_4 *_{C_2} (C_2\times C_2)$, where
\[ D_4 = \langle a,b \mid a^4, b^2, \, a^b=a^{-1} \rangle \quad \hbox{and} \quad  C_2 \times C_2 = \langle ab \rangle \times \langle z \rangle \, \]
and the amalgamation is over $ \langle ab \rangle$. Recall that every $g\in G$ can be uniquely written in the \emph{reduced form}
\[ g = (ab)^\varepsilon a^{e_1}za^{e_2} \ldots a^{e_{i-1}}za^{e_i} \,,\]
where $\varepsilon\in \{0,1\}$, $i \in \mathbb{N}$, $e_1,  e_i\in \{0,1,2,3\}$, and $e_2, \dots , e_{i-1}\in \{1,2,3\}$
(see, for instance,~\cite[Theorem~I.1]{SerreTrees}).
Following~\cite{SerreTrees}, we define the \emph{length of $g$} to be the number of symbols $a$, $a^2$, $a^3$ and $z$ we use to write it in its reduced form. Observe that this length defines a pseudometric on $G$ in the usual fashion, and hence we can talk about balls of centre $1$,
\[ \B_\ast(k) := \left\{ g \in G \mid \len_\ast(g)\le k \right\} \,.\]

We now state a result that we will prove in \cref{sec:finiteIndex}.
\begin{lemma}\label{lemma:finiteIndexNormal}
    Let $G$ be the amalgamated product $D_4 *_{C_2} (C_2\times C_2)$, where
    \[ D_4 = \langle a,b \mid a^4, b^2, \, a^b=a^{-1} \rangle \quad \hbox{and} \quad 
    C_2 \times C_2 = \langle ab \rangle \times \langle z \rangle \, \]
    and the amalgamation is over $ \langle ab \rangle$. Then, for every prime $\ell$, there exists a finite index normal subgroup $N$ of $G$ such that
    \begin{enumerate}[$(a)$]
        \item $N \cap \B_\ast(4\ell -1) = \left\{1\right\}$,
        \item $N \cap \B_\ast(4\ell + 1) = \left\{1, \, (za^2)^{\pm 2\ell} \right\}$.
    \end{enumerate}
\end{lemma}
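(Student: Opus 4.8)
The plan is to realise $N$ as a verbal subgroup of an explicit free normal subgroup of $G$ and to feed the length information of \cref{cor: p-power-nilpotent} into it. Because $G$ is an amalgam of finite groups it is virtually free, so any torsion-free subgroup of it is free. I would take $K:=\ker\pi$ for the surjection $\pi\colon G\twoheadrightarrow D_4\times C_2$ determined by $a\mapsto(a,1)$, $b\mapsto(b,1)$, $z\mapsto(1,c)$ (with $\langle c\rangle=C_2$); this is well defined because both amalgamated factors send $ab$ to $(ab,1)$, and it is injective on each vertex group, so $K$ meets every conjugate of a vertex group trivially and is therefore torsion-free, hence free. Multiplicativity of the Euler characteristic gives $[G:K]=16$ and $\operatorname{rk}(K)=3$. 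The reason for this particular $\pi$ is that $\pi(za^2)=(a^2,c)$ has order $2$, so $\xi:=(za^2)^2\in K$ while $za^2\notin K$; a Reidemeister–Schreier computation (choosing the Schreier transversal suitably) exhibits $\xi$ as a member of a free basis $\{\xi,x_2,x_3\}$ of $K$. Then I set $N_0:=K^{\ell}\gamma_{\ell}(K)$. Being verbal, $N_0$ is characteristic in $K$, hence normal in $G$; since $\ell$ is prime, $K/N_0$ is a finitely generated nilpotent group of exponent dividing $\ell$, hence a finite $\ell$-group, so $N_0$ has finite index in $G$. Moreover $(za^2)^{2\ell}=\xi^{\ell}\in K^{\ell}\subseteq N_0$, and since $\xi$ is a basis element, $za^2$ has order exactly $2\ell$ modulo $N_0$ (odd powers of $za^2$ are not even in $K$, and $(za^2)^{2j}=\xi^{j}\notin N_0$ for $0<j<\ell$).

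Next I would transfer the length bound. By \cref{cor: p-power-nilpotent} applied to $K$, every $1\neq w\in N_0$ has $\len(w)\geq\ell$ in the basis $\{\xi,x_2,x_3\}$, with equality only when $w\in\{\xi^{\pm\ell},x_2^{\pm\ell},x_3^{\pm\ell}\}$. To convert this into the amalgam metric $\len_{\ast}$ I would use that $za^2$ is cyclically reduced of syllable length $2$, so no cancellation occurs in its powers and $\len_{\ast}(\xi^{m})=\len_{\ast}((za^2)^{2m})=4|m|$; in particular $\len_{\ast}(\xi^{\pm\ell})=4\ell$. For a general $w\in K$, writing it as a reduced word in $\xi,x_2,x_3$ and then in reduced amalgam form, the cancellation at each junction is bounded by a constant depending only on the fixed basis, which yields an estimate $\len_{\ast}(w)\geq 4\len(w)-c_0$ and, sharpened, the statement that the only nontrivial elements of $N_0$ with $\len_{\ast}\leq 4\ell+1$ are $\xi^{\pm\ell}=(za^2)^{\pm2\ell}$, provided $x_2,x_3$ are themselves long. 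Observe that these elements lie in the index-two rotation subgroup of $\langle z,a^2\rangle\cong D_\infty$, all of whose nontrivial elements have even $\len_{\ast}$; this is the reason the argument produces only even girths, as remarked in the introduction.

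If $x_2$ or $x_3$ fails to be long enough for the sharp statement, I would prune: $N_0\cap\B_{\ast}(4\ell+1)$ is a finite set, and its elements other than $1,(za^2)^{\pm2\ell}$ (short basis powers and their $G$-conjugates) can all be avoided by some finite-index normal $M\unlhd G$ with $(za^2)^{2\ell}\in M$, obtained from residual finiteness applied to the quotient $G/\langle\langle(za^2)^{2\ell}\rangle\rangle$. Setting $N:=N_0\cap M$ gives a finite-index normal subgroup of $G$ containing $(za^2)^{2\ell}$ and inheriting the length lower bound from $N_0$, which gives $(a)$ and $(b)$. Tracking indices, $[G:K]=16$ and $|K/N_0|\leq N(3,2\ell)$ (as $K/N_0$ is $3$-generated of exponent dividing $2\ell$), which is what feeds \cref{prop:bounds}.

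\textbf{Main obstacle.} The technical heart is the length comparison in the second paragraph: controlling cancellation in reduced forms precisely enough to certify that $(za^2)^{2\ell}$ is the shortest nontrivial member of $N$ and that nothing new appears at the next length, and verifying that $\xi=(za^2)^2$ can indeed be taken as a (uniquely shortest) basis element of $K$. A secondary difficulty is making the pruning of the third paragraph available — i.e.\ the residual finiteness of the one-relator quotient $G/\langle\langle(za^2)^{2\ell}\rangle\rangle$ — which is exactly where the explicit, prime-only route via \cref{cor: p-power-nilpotent} earns its keep over a purely abstract argument.
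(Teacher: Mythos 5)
Your skeleton --- an index-$16$ free normal subgroup $K\leq G$ of rank $3$ containing $(za^2)^2$ in a basis, then a finite-index normal subgroup of $K$ with controlled short elements, then a transfer from the word metric of $K$ to the syllable metric of $G$ --- is the paper's skeleton, but the two steps you defer to your ``main obstacle'' paragraph are precisely the substance of the paper's argument, and the one quantitative claim you do make for the transfer is false. In the basis $x_1=(za^2)^2$, $x_2=(za)^2$, $x_3=(za^3)^2$, a junction of the form $x_i^{-1}x_j$ ($i\neq j$) collapses \emph{three} syllables (the two adjacent $z$'s cancel and the neighbouring $a$-powers merge), so the loss grows linearly in the number of junctions rather than being bounded by a constant: for instance $\len_S\bigl((x_2^{-1}x_3)^m\bigr)=2m$ while $\len_\ast\bigl((x_2^{-1}x_3)^m\bigr)=4m+1$. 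Hence no inequality $\len_\ast(w)\geq 4\len_S(w)-c_0$ holds, and a coarse quasi-isometry estimate cannot certify that nothing new appears at syllable length $\leq 4\ell+1$. (Relatedly, any normal subgroup of $G$ containing $(za^2)^{2\ell}$ also contains $a^{-1}(za^2)^{2\ell}a=(x_2^{-1}x_3)^{\ell}$, of syllable length exactly $4\ell+1$, so the sharp conclusion at the end of your second paragraph cannot hold as written; in the paper these conjugates are exactly the second family of girth cycles used in \cref{lemma:BassSerre}.) The paper sidesteps the metric comparison: it takes $H=\langle (za^2)^{2\ell}\rangle^G=\langle x_1^\ell,(x_2^{-1}x_3)^\ell\rangle^{F_3}$ and determines its short elements exactly by passing to the free product $F_3/H\cong C_\ell\ast C_\ell\ast C_\infty$ and reading off normal forms (\cref{lem: powers 1}); that normal-form analysis is the sharp cancellation control your sketch is missing.

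The second gap is the pruning. Choosing $N_0=K^{\ell}\gamma_{\ell}(K)$ forces $(za)^{\pm 2\ell}$ and $(za^3)^{\pm 2\ell}$ into $N_0$, both of syllable length $4\ell$, and your removal of them requires two nontrivial facts you do not establish: (i) that these elements do not lie in $\langle (za^2)^{2\ell}\rangle^{G}$ --- this is again \cref{lem: powers 1}, or the evaluation-homomorphism argument used in the proof of \cref{prop:bounds} --- and (ii) that $G/\langle (za^2)^{2\ell}\rangle^{G}$ is residually finite, which the paper obtains by identifying $F_3/H$ with $C_\ell\ast\langle a,b\mid (ab^{-1})^\ell\rangle$ and invoking the Allenby--Tang theorem (\cref{thm: AT}) together with \cref{lem: res fin}. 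Once (i) and (ii) are available, the verbal subgroup and \cref{cor: p-power-nilpotent} contribute nothing further to this lemma (the paper reserves them for the effective bound of \cref{prop:bounds}); without them, your construction does not close.
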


We now start the construction of our putative $4$-valent graph $\Delta$. We consider the graph of groups
    \[\begin{tikzcd}
        D_4 &&& C_2 \times C_2
        \arrow["C_2", no head, from=1-1, to=1-4]
    \end{tikzcd}\]
and we denote by $\mathcal{T}$ the standard tree obtained from this graph of groups. Recall that $\mathrm{Aut}(\mathcal{T})$ is isomorphic to $G$, and that, as $|D_4:C_2|=4$ and $|C_2\times C_2:C_2|=2$, the vertices labelled by right cosets of $D_4$ have valency $4$, while those labelled by right cosets of $C_2\times C_2$ have valency $2$.

Observe that there is a one-to-one correspondence between non-backtracking paths of even length starting in $D_4$ and right cosets of $D_4$. Moreover, the length of one of these paths coincides with the (minimal) length of the reduced form among the representatives of the right coset of $D_4$ which is the end-vertex of the path. 

Note that $\mathcal{T}$ is the barycentric subdivision of the infinite $4$-valent tree $\mathcal{S}$ (that is, by exchanging each vertex labelled by right cosets of $C_2\times C_2$ with an edge joining its two neighbours, we obtain the infinite $4$-valent tree). Therefore, $G$ acts arc-transitively on $\mathcal{S}$.
The metrics of these graphs are such that two vertices at distance $2h$ in $\mathcal{T}$ are at distance $h$ in $\mathcal{S}$.

Before proceeding, it is convenient to have a concrete example of a path in $\mathcal{S}$ spelled out. For instance, the word $za^{e_1}za^{e_2}$ defines the path
\[\begin{tikzcd}
    D_4 && D_4 za^{e_2} && D_4 za^{e_1}za^{e_2}
    \arrow[no head, from=1-1, to=1-3]
	\arrow[no head, from=1-3, to=1-5]
\end{tikzcd}\]
while its inverse $a^{-e_2}za^{-e_1}z$ corresponds to the picture
\[\begin{tikzcd}
    D_4 && D_4 z && D_4 za^{-e_1}z
    \arrow[no head, from=1-1, to=1-3]
	\arrow[no head, from=1-3, to=1-5]
\end{tikzcd}\]

Let $N$ be the finite index normal subgroup of $G$ described in \cref{lemma:finiteIndexNormal}, and let $\Delta_\ell = \mathcal{S}/N$ be the finite $4$-valent graph obtained by quotienting $\mathcal{S}$ by $N$ (that is, the graph whose vertex-sets and edge-sets coincides with the $N$-orbits of $V\mathcal{S}$ and $E\mathcal{S}$, respectively). We want to show that $\Delta_\ell$ satisfies the following additional properties.

\begin{lemma}\label{lemma:BassSerre}
    For every positive integer $\ell\ge 2$, there exists a finite connected $4$-valent graph $\Delta_\ell$ such that
    \begin{enumerate}[$(a)$]
        \item $\Aut(\Delta_\ell)$ is arc-transitive and locally-imprimitive,
        \item each vertex-stabilizer has order at least $8$,
        \item the $\mathrm{Aut}(\Delta_\ell)$-invariant $2$-factor consists of girth cycles,
        \item $\g(\Delta_\ell)=2 \ell$.
    \end{enumerate}
\end{lemma}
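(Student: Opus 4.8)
The plan is to verify all four properties for the graph $\Delta_\ell=\mathcal{S}/N$ already introduced, where $N$ is the normal subgroup supplied by \cref{lemma:finiteIndexNormal}; only its conditions $(a)$ and $(b)$ will enter, so in fact the argument works for any finite‑index $N\unlhd G$ with those two properties. First I would record that $\Delta_\ell$ is a well‑behaved quotient. Every torsion element of the amalgam $G$ is conjugate into one of the two factors $D_4$ or $C_2\times C_2$, hence into a conjugate of $\B_\ast(2)$; as $N$ is normal with $N\cap\B_\ast(4\ell-1)=\{1\}$, it follows that $N$ is torsion‑free and therefore acts freely on $\mathcal{S}$. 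Hence the quotient map $\pi\colon\mathcal{S}\to\Delta_\ell$ is a covering of graphs, $\Delta_\ell$ is connected and $4$‑valent, and it is finite since $[G:N]<\infty$ and $G$ is vertex‑transitive on $\mathcal{S}$. Moreover $G/N$ embeds in $\Aut(\Delta_\ell)$, inherits arc‑transitivity from the action of $G$ on $\mathcal{S}$, and, a point stabilizer $D_4$ of $G$ meeting $N$ trivially, has vertex‑stabilizers isomorphic to $D_4$. This already gives $(b)$ and the arc‑transitivity part of $(a)$.

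Next I would pin down the candidate $2$‑factor. A vertex‑stabilizer $D_4$ acts on the four $\mathcal{S}$‑neighbours of its fixed vertex as the dihedral group of the $4$‑cycle, hence imprimitively, with the two blocks of size two formed by ``opposite'' neighbours; this block system is $G$‑invariant, so $G$ is locally imprimitive on $\mathcal{S}$. Following the blocks one gets the bi‑infinite ``block geodesics'' of $\mathcal{S}$, namely the geodesic lines that at every vertex pass between the two edges of a single block: they partition $E\mathcal{S}$ with exactly two through each vertex, and a direct computation identifies them with the $G$‑translates of the axis $\{(za^2)^{k}v_0 : k\in\mathbb{Z}\}$ of $za^2$, where $v_0$ is the base vertex. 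Their images under $\pi$ form a partition $\mathcal{C}$ of $E\Delta_\ell$ into cycles, with exactly two through each vertex: a $G/N$‑invariant $2$‑factor in the sense of \cref{con:splitting}.

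The technical core is the length bookkeeping. I would establish, by an elementary analysis of reduced forms in $G$, that for every $w\in G$ the distance $d_{\mathcal{S}}(v_0,w v_0)$ equals the number of letters $z$ in the reduced form of $w$ --- a quantity unchanged by right multiplication by $D_4$, hence well defined on the coset $wD_4$ --- and that $\len_\ast(w)\le 2\,d_{\mathcal{S}}(v_0,w v_0)+1$. Condition $(a)$ forces $\len_\ast(\nu)\ge 4\ell$ for every $\nu\in N\smallsetminus\{1\}$, whence $d_{\mathcal{S}}(v_0,\nu v_0)\ge 2\ell$ for all such $\nu$. A shortest cycle of $\Delta_\ell$ lifts through $\pi$ to a non‑backtracking path of $\mathcal{S}$, that is, to a geodesic joining two $N$‑equivalent vertices; after translating it to $v_0$ its length equals $d_{\mathcal{S}}(v_0,\nu v_0)$ for some $\nu\in N\smallsetminus\{1\}$, hence is at least $2\ell$. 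On the other hand $(b)$ gives $(za^2)^{2\ell}\in N$ while $(za^2)^{k}\notin N$ for $0<k<2\ell$, so the axis of $za^2$ projects to a cycle of $\mathcal{C}$ of length exactly $2\ell$. Therefore $\g(\Delta_\ell)=2\ell$, which is $(d)$, and every cycle of $\mathcal{C}$ is a girth cycle.

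Finally I would upgrade the invariance of $\mathcal{C}$ from $G/N$ to the whole of $\Aut(\Delta_\ell)$. Let $C$ be an arbitrary girth cycle of $\Delta_\ell$; the same lift‑and‑translate step realises $C$ as the geodesic from $v_0$ to $\mu v_0$ for some $\mu\in N$ with $d_{\mathcal{S}}(v_0,\mu v_0)=2\ell$, so $\len_\ast(\mu)\le 4\ell+1$, and then $(b)$ forces $\mu=(za^2)^{\pm 2\ell}$. Since geodesics in a tree are unique, this geodesic is a segment of the axis of a conjugate of $za^2$, so $C\in\mathcal{C}$. Thus $\mathcal{C}$ is \emph{precisely} the set of girth cycles of $\Delta_\ell$, hence is preserved by $\Aut(\Delta_\ell)$; as $\Aut(\Delta_\ell)$ is arc‑transitive and preserves the pairing of the four edges at each vertex induced by $\mathcal{C}$, the action of any vertex‑stabilizer on the neighbourhood of its fixed vertex is transitive but imprimitive. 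This completes the local imprimitivity in $(a)$ and the last assertion of $(c)$. I expect this closing step to be the main obstacle: it is exactly here that part $(b)$ of \cref{lemma:finiteIndexNormal} --- a ball of radius $4\ell+1$ meeting $N$ in $1$ together with only the two ``axis powers'' --- is indispensable, for it identifies $\mathcal{C}$ intrinsically and so lets us control the \emph{full} automorphism group and not merely $G/N$; by comparison, the word combinatorics behind the two length estimates, and the verification that $\mathcal{C}$ is a genuine $2$‑factor, are routine.
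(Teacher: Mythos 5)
Your proof is correct and follows essentially the same route as the paper's: both take $\Delta_\ell = \mathcal{S}/N$, observe that $G/N$ acts arc-transitively with vertex-stabilizer $\cong D_4$ because $D_4\cap N=\{1\}$ (giving $(b)$), use \cref{lemma:finiteIndexNormal}$(b)$ to identify the girth cycles precisely as those along the two $D_4$-blocks (giving $(d)$), and then observe that since \emph{all} girth cycles lie in the canonical $2$-factor, the full automorphism group must preserve the block structure, yielding $(a)$ and $(c)$. The only difference is one of explicitness: you spell out that $N$ is torsion-free so that $\pi\colon\mathcal{S}\to\Delta_\ell$ is a covering, and you carry out the word-length bookkeeping ($d_{\mathcal{S}}(v_0,wv_0)=\#\{z\text{'s in }w\}$ and $\len_\ast(w)\le 2\,d_{\mathcal{S}}(v_0,wv_0)+1$, whence $\len_\ast(\nu)\ge 4\ell$ forces $d_{\mathcal{S}}\ge 2\ell$), whereas the paper compresses these steps into the phrase ``recalling the correspondence between non-backtracking paths and words.'' This is more rigour, not a different argument.
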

\begin{proof}
Let $\alpha$ be the vertex of $\Delta_\ell$ where the vertex $D_4$ of $\mathcal{T}$ projects to through the operations we have previously described. First, we observe that $\mathrm{Aut}(\mathcal{T})/N = G/N$ is isomorphic to a subgroup of $\mathrm{Aut}(\Delta_\ell)$. Observe that $D_4 \subseteq \B_\ast(1)$, thus \cref{lemma:finiteIndexNormal}~$(a)$ implies that $D_4 \cap N$ is trivial. It follows that the vertex-stabilizer of $\alpha$ contains
\[G_{D_4}/N \cong D_4N/N \cong D_4 \,,\]
which is a subgroup of order $8$. This proves $(b)$.
    
Let us denote by $\alpha_z$, $\alpha_{za}$, $\alpha_{za^2}$ and $\alpha_{za^3}$ the neighbours of $\alpha$, which can be recognized as the projection of the vertices $D_4 z$, $D_4 za$, $D_4 za^2$ and $D_4 za^3$, respectively. Observe that the action of $D_4$ on the neighbourhood of $\alpha$ defines the blocks of imprimitivity $\{\alpha_z, \alpha_{za^2}\}$ and $\{\alpha_{za}, \alpha_{za^3}\}$. Furthermore, using \cref{lemma:finiteIndexNormal}~$(b)$ and recalling the correspondence between non-backtracking paths and words, the girth cycles through $\alpha$ in $\Delta_\ell$ have length $2\ell$ and they contain either the pair $\{\alpha_z, \alpha_{za^2}\}$ or $\{\alpha_{za}, \alpha_{za^3}\}$ (the former being induced by the word $(za^2)^{\pm 2\ell}$ and the latter by $(a^{-1}(za^2)^{2\ell} a)^{\pm 1}$). Therefore, $(d)$ holds.

Moreover, \cref{lemma:finiteIndexNormal}~$(b)$ also implies that there exists no girth cycle that contains any pair of neighbours of $\alpha$ distinct from $\{\alpha_z, \alpha_{za^2}\}$ and $\{\alpha_{za}, \alpha_{za^3}\}$. It follows that $\mathrm{Aut}(\Delta_\ell)_\alpha$ must define the same two block of imprimitivity on the neighbourhood of $\alpha$ as $D_4$ does, and hence $(a)$ holds. Last, we can note that the $\mathrm{Aut}(\Delta_\ell)$-invariant $2$-factor contains precisely the $\mathrm{Aut}(\Delta_\ell)$-orbits of these two girth cycles through $\alpha$. Thus, $(c)$ is proved.
\end{proof}

We are now fully equipped to prove the main theorem of this note.

\begin{proof}[Proof of \cref{thm}]
    For every $\ell\ge 2$, let $\Delta_\ell$ be the $4$-valent graph described in \cref{lemma:BassSerre}, and let $\mathcal{C}$ be the $\Aut(\Delta_\ell)$-invariant $2$-factor of $\Delta_\ell$. Using \cref{lemma:girthSplit} and the fact that the order of every vertex-stabilizer is halved by applying \cref{con:splitting}, we obtain that
    \[ \Gamma_\ell = \mathrm{s}(\Delta_\ell, \mathcal{C})\]
    enjoys all the properties required by \cref{thm}.
\end{proof}

\section{Subgroups consisting of long words}\label{sec:finiteIndex}
\cref{sec:finiteIndex} is devoted to proving \cref{lemma:finiteIndexNormal}. We will need some preliminary results.

Let $G$ be a finitely generated group on generators $S = \{ x_1, \dots, x_k\}$, and let $g\in G$. The \emph{length of \( g\) with respect to $S$} is the length of the shortest word in the elements of \( S \) that represents \( g \), or, equivalently, the distance between $1$ and $g$ in the Cayley graph $\mathrm{Cay}(G,S)$. We denote the length of $g$ by \(\len_S(g)\). In the same fashion, we will denote by $\B_S(\ell)$ the ball of radius $\ell$ around the identity (usually when $G$ is the free group, we will drop the subindexes $S$, as it will refer to the canonical generators). Note that this notation is consistent with that introduced in Section~\ref{sec:BassSerre}, because, in that context, it corresponds to the length in the quotient $(D_4 *_{C_2} C_2 \times C_2) / C_2$ with respect to the generating set $\{a, a^2, a^3, z \}$.

\begin{lemma}
\label{lem: res fin}
Let $G$ be a finitely generated group with finite generating set $S$, and let $H \unlhd G$ be a normal subgroup. Suppose that $G/H$ is residually finite. Then, for every positive integer $\ell$, there exists a finite index normal subgroup \( K_\ell \trianglelefteq G \) containing \( H \) such that 
\[ K_\ell \cap \B_S(\ell) = H \cap \B_S(\ell).
\]
\end{lemma}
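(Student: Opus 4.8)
The plan is to exploit residual finiteness of $G/H$ to "separate" the finitely many nontrivial cosets that meet the ball $\B_S(\ell)$. Since $S$ is finite, the ball $\B_S(\ell)$ is a finite set, so its image in $G/H$ is a finite set; let $\bar g_1, \dots, \bar g_m$ be the finitely many nonidentity elements of $G/H$ arising this way, i.e.\ the images of those $g \in \B_S(\ell)$ with $g \notin H$. First I would invoke residual finiteness: for each $i \in \{1, \dots, m\}$ there is a finite index normal subgroup $\bar M_i \trianglelefteq G/H$ with $\bar g_i \notin \bar M_i$. Taking $\bar M = \bigcap_{i=1}^m \bar M_i$, which is still a finite index normal subgroup of $G/H$ (a finite intersection of finite index subgroups), we get that $\bar M$ avoids every $\bar g_i$.

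Next I would pull $\bar M$ back to $G$: let $K_\ell$ be the preimage of $\bar M$ under the quotient map $\pi \colon G \to G/H$. Then $K_\ell \trianglelefteq G$ since $\bar M$ is normal, $K_\ell$ has finite index in $G$ (its index equals $|G/H : \bar M|$), and $K_\ell \supseteq H = \ker \pi$ by construction. It remains to verify $K_\ell \cap \B_S(\ell) = H \cap \B_S(\ell)$. The inclusion $\supseteq$ is immediate from $H \subseteq K_\ell$. For $\subseteq$, take $g \in K_\ell \cap \B_S(\ell)$. If $g \notin H$, then $\pi(g)$ is one of the $\bar g_i$, hence $\pi(g) \notin \bar M$, contradicting $g \in K_\ell = \pi^{-1}(\bar M)$. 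Therefore $g \in H$, which gives the desired inclusion, and the proof is complete.

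I do not anticipate a genuine obstacle here: the statement is a soft, standard separation argument, and the only point requiring a moment's care is the finiteness of $\B_S(\ell)$ — which holds because $S$ is finite, so there are only finitely many words of length at most $\ell$ — guaranteeing that the intersection $\bar M = \bigcap_i \bar M_i$ is over a finite index set and hence still of finite index. One should also note that the case $m = 0$ (when $\B_S(\ell) \subseteq H$) is harmless: one may simply take $K_\ell = G$, or equivalently interpret the empty intersection as $G/H$ itself.
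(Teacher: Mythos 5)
Your proposal is correct and takes essentially the same approach as the paper: both use residual finiteness of $G/H$ to separate each of the finitely many elements of $\B_S(\ell) \setminus H$ by a finite-index normal subgroup, then intersect. The only cosmetic difference is that the paper works directly with finite-index normal subgroups of $G$ containing $H$, whereas you work in the quotient $G/H$ and pull back; these are equivalent via the correspondence theorem.
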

\begin{proof}
Since $G/H$ is residually finite, there exists a family of finite index normal subgroups $\{ H_i \}_{i \in I}$ such that 
\[ H = \bigcap_{i \in I} H_i.\]
For each $w \in \B_S(\ell) - H$, there exists $i(w) \in I$ such that $w$ is not an element of $H_{i(w)}$. Accordingly, consider the finite index normal subgroup 
\[ K_\ell = \bigcap_{w \in \B_{S}(\ell) - H} H_{i(w)}.\]
Then \( H \leq K_\ell \) and $K_\ell$ and $\B_S(\ell) - H$ are disjoint. Therefore, $K_\ell$ is the required normal subgroup.
\end{proof}

We require a small-scale form of \Cref{thm: p-power} for arbitrary integers.

\begin{lemma}
\label{lem: powers 1}
Let $F_3 = \langle x_1, x_2, x_3 \rangle$ be the free group of rank $3$, and let $H$ be the normal closure of the subgroup generated by $x_1^\ell$ and $(x_2x_3^{-1})^\ell$, that is, $H = \langle x_1^\ell, (x_2x_3^{-1})^\ell \rangle^{F_3}$. Then every word $w$ in $H$ satisfies $\len(w) \geq \ell$, and equality is attained if and only if $w = x_1^{\pm \ell}$.
\end{lemma}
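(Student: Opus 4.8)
The plan is to move to a free basis of $F_3$ adapted to the two relators, after which both membership in $H$ and the word length of a member can be read off from the normal form in a free product of cyclic groups.

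First, set $\tilde y := x_2 x_3^{-1}$. Since the map fixing $x_1,x_3$ and sending $x_2 \mapsto \tilde y x_3$ is an automorphism of $F_3$, the triple $\{x_1,\tilde y,x_3\}$ is again a free basis of $F_3$, and with respect to it $H = \langle x_1^{\ell},\, \tilde y^{\,\ell}\rangle^{F_3}$. Let $Q := \langle u \mid u^{\ell}\rangle * \langle v \mid v^{\ell}\rangle * \langle t\rangle \cong C_{\ell} * C_{\ell} * \Z$ and let $\pi \colon F_3 \to Q$ be the homomorphism with $\pi(x_1)=u$, $\pi(x_2)=vt$, $\pi(x_3)=t$ (so $\pi(\tilde y)=v$). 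Since $\pi(x_1^{\ell})=u^{\ell}=1$ and $\pi\bigl((x_2 x_3^{-1})^{\ell}\bigr)=v^{\ell}=1$, and $\ker\pi$ is normal, we get $H \subseteq \ker\pi$; this inclusion (in fact an equality, though I will not need that) is the only structural input.

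Now take $w \in H$ with $w \neq 1$ and put $n := \len(w)$. Let $w^{\sharp}$ be the reduced expression of $w$ in the basis $\{x_1,\tilde y,x_3\}$, produced from the reduced $\{x_1,x_2,x_3\}$-form of $w$ by the substitution $x_2 \mapsto \tilde y x_3$, $x_2^{-1}\mapsto x_3^{-1}\tilde y^{-1}$ followed by free reduction. Grouping maximal runs, write $w^{\sharp}=g_1^{m_1}\cdots g_p^{m_p}$ with $g_i\in\{x_1,\tilde y,x_3\}$, $g_i\neq g_{i+1}$, $m_i\neq 0$, and $p\geq 1$. Under $\pi$ each $g_i$ maps to a generator of one of the three free factors, and consecutive ones to different factors; so if every $m_i$ with $g_i\in\{x_1,\tilde y\}$ were not divisible by $\ell$, then $\pi(g_1)^{m_1}\cdots\pi(g_p)^{m_p}$ would already be a normal form of syllable length $p\geq 1$, contradicting $\pi(w^{\sharp})=1$. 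Hence there is an index $i$ with $g_i\in\{x_1,\tilde y\}$ (the factor $\langle t\rangle$ is torsion-free and $m_i\neq 0$) and $\ell\mid m_i$, so $|m_i|\geq \ell$.

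It remains to convert this into a length bound for $w$ itself, and this is the one delicate point, since the change of basis does \emph{not} preserve word length. The substitution leaves all $x_1^{\pm 1}$ letters untouched and replaces each $x_2^{\pm 1}$ by exactly one $\tilde y^{\pm 1}$ of the same sign (alongside an $x_3^{\mp 1}$), while free reduction only deletes letters in inverse pairs; consequently $w$ has at least as many positive (resp.\ negative) occurrences of $x_1$ as $w^{\sharp}$ has of $x_1$, and at least as many positive (resp.\ negative) occurrences of $x_2$ as $w^{\sharp}$ has of $\tilde y$. Apply this to the collapsing run $g_i^{m_i}$, say with $m_i\geq \ell$ (the case $m_i\leq-\ell$ being symmetric). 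If $g_i=x_1$, then $w$ contains at least $\ell$ positive occurrences of $x_1$, so $n\geq\ell$; and if $n=\ell$ these account for all of $w$, i.e.\ $w=x_1^{\ell}$. If $g_i=\tilde y$, then $w$ contains at least $\ell$ positive occurrences of $x_2$, so again $n\geq\ell$; here $n=\ell$ would force $w=x_2^{\ell}$, which is impossible since $\pi(x_2^{\ell})=(vt)^{\ell}\neq 1$ in $Q$ while $w\in\ker\pi$. Thus every $w\in H\setminus\{1\}$ has $\len(w)\geq\ell$, with equality only for $w=x_1^{\pm\ell}$; and since $x_1^{\pm\ell}\in H$ has length $\ell$, the equality case is precisely $w=x_1^{\pm\ell}$. (The argument is uniform in $\ell\geq 1$; for $\ell=1$ one reads $\pi(x_2)=t\neq 1$ in place of $(vt)^{\ell}\neq 1$.)

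The conceptual ingredients — the change of basis, the identification of $F_3/H$ with a free product of cyclic groups, and the normal-form dichotomy — are routine. The real work is the letter-by-letter bookkeeping of the third displayed step above: one must justify that an occurrence of $x_1^{\pm1}$ or of $\tilde y^{\pm1}$ in $w^{\sharp}$ can only have disappeared, relative to $w$, against a letter of the opposite sign (immediate, since free cancellation pairs a letter with its formal inverse), and keep careful track of which $x_2$-letters of $w$ the $\tilde y$-letters of $w^{\sharp}$ come from. That is where the full proof would expand this sketch.
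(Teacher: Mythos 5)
Your proof is correct, and it follows the same conceptual route as the paper's: realize $F_3/H$ as the free product $C_\ell*C_\ell*\mathbb{Z}$ and extract the length bound from the normal-form theorem. Where you diverge is in the bookkeeping, and your extra care is not wasted. The paper defines the map $\pi$ directly by $x_i^\pi=z_i$ for all $i$ and then asserts $H=\ker(\pi)$; but the map $x_i\mapsto z_i$ into $\langle z_1,z_2,z_3\mid z_1^\ell,z_2^\ell\rangle$ has kernel $\langle x_1^\ell,x_2^\ell\rangle^{F_3}$, which is a \emph{different} normal subgroup from $H=\langle x_1^\ell,(x_2x_3^{-1})^\ell\rangle^{F_3}$ (the two are conjugate by the automorphism $x_2\mapsto x_2x_3^{-1}$, which does not preserve word length). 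The natural quotient would instead send $x_2\mapsto z_2z_3$, so the paper's displayed computation $w^\pi=z_{i_1}^{\alpha_1}\cdots z_{i_k}^{\alpha_k}$ does not literally hold for that map either. Your version avoids this issue: you send $x_2\mapsto vt$ (so the homomorphism genuinely has $H$ in its kernel), pass to the adapted basis $\{x_1,\tilde y,x_3\}$ only in order to read off the syllable decomposition, and then translate the syllable bound in $w^{\sharp}$ back into a letter-count bound on $w$ by the observation that free cancellation can only decrease the count of each signed letter. That last step is exactly the point the paper elides, and it is also what forces you to rule out $w=x_2^{\ell}$ separately in the equality case, which you do correctly by noting $(vt)^\ell\neq 1$ in $Q$. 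One trivial slip: the replacement of $x_2^{\pm 1}$ produces $\tilde y^{\pm 1}$ alongside $x_3^{\pm 1}$, not $x_3^{\mp 1}$; this does not affect the argument, which tracks only the $x_1$- and $\tilde y$-letters.
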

\begin{proof}
Consider the natural quotient 
\[ F_3/H = \left\langle y_1, \, y_2, \, y_3 \,\middle\vert\, y_1^{\ell}, \, \left(y_2y_3^{-1} \right)^\ell \right\rangle \,.\]
We rewrite its generators by putting
\[z_1 = y_1 , \quad z_2=y_2y_3^{-1} , \quad z_3=y_3 \,.\]
Hence, we see that
\[ F_3/H = \left\langle  z_1, \, z_2, \, z_3 \,\middle\vert\, z_1^\ell, \, z_2^\ell  \right\rangle = C_\ell * C_\ell * C_\infty \,\]
is the free product of three cyclic groups. 
Consider the projection map $\pi: F_3 \to F_3/H$ defined on the generators by
\[x_i ^\pi = z_i, \quad \hbox{for every } i\in\{1,2,3\} \,.\]
Let $w = x_{i_1}^{\alpha_1} \dots x_{i_k}^{\alpha_k}$ be a nontrivial reduced word, for some exponents $\alpha_j \in \Z$ and indices $i_j \in \{1, 2, 3 \}$ so that adjacent letters are distinct.

Assume that $\len(w) < \ell$. Then,
\[ w^\pi = z_{i_1}^{\alpha_{1}} \dots z_{i_k}^{\alpha_k} \,.\]
As $\len(w) < \ell$, each $\alpha_j \not\equiv 0 \pmod{\ell}$, and, hence, $w^\pi$ is nontrivial. Since $H=\ker(\pi)$, $w$ is not an element of $H$, as required.

Assume now that $\len(w)=\ell$ and that $w \in H$. As $w^\pi$ is trivial and $w$ is not, we have that $w$ projects to either $z_1^{\pm\ell}$ or $z_2^{\pm\ell}$. In the former case, $w = x_1^{\pm \ell}$. In the latter,  $w = (x_2x_3^{-1})^{\pm \ell}$, against the hypothesis on the length. Therefore, $w=x_1^{\pm\ell}$, which completes the proof.
\end{proof}

The last piece of information we need for our proof is the following partial result  concerning Baumslag's Conjecture on the residual finiteness of one-relator groups.
\begin{thm}[\cite{AT}, Theorem~2]
\label{thm: AT}
    Let
    \[ G = \left\langle \, x, y \mid (x^\alpha y^\beta x^\gamma y^\delta)^\ell \, \right\rangle \]
    where $\alpha, \beta, \gamma, \delta$ are integers, and $\ell$ is a positive integer. Then, $G$ is residually finite.
\end{thm}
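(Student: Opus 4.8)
The statement is the instance of Baumslag's conjecture in which the relator is a proper power $w^{\ell}$ of the four-syllable word $w=x^{\alpha}y^{\beta}x^{\gamma}y^{\delta}$, and the plan is to prove it --- for $\ell\ge 2$, which is the case needed here ($\ell$ being prime) --- by running the classical hierarchy for one-relator groups with torsion. First I would normalise: after cyclically reducing $w$ and absorbing any proper-power factorisation of $w$ into $\ell$, I may assume $w$ is not a proper power; and if any one of $\alpha,\beta,\gamma,\delta$ vanishes then $w$ already has at most two syllables, so $G$ is either a free product of cyclic groups or a group of the shape $\langle a,b\mid (a^{p}b^{q})^{\ell}\rangle$, both of which are treated as base cases below. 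Hence I may assume $w$ genuinely has four syllables and that $G=\langle x,y\mid w^{\ell}\rangle$ is a one-relator group \emph{with torsion}.

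The core of the plan is the Magnus--Moldavanskii reduction. After a $\mathrm{GL}_{2}(\Z)$ change of the generating pair $\{x,y\}$ (and, in Moldavanskii's setup, an auxiliary generator to keep the complexity controlled, should both exponent sums be nonzero), I may assume that one generator, say $x$, has exponent sum $0$ in the relator; for our word this is the case $\gamma=-\alpha$, and, replacing $x$ by $x^{-1}$ if needed, $\alpha>0$, so that $w=x^{\alpha}y^{\beta}x^{-\alpha}y^{\delta}=y_{\alpha}^{\beta}y_{0}^{\delta}$ where $y_{i}:=x^{i}yx^{-i}$. Then $G$ is an HNN extension
\[
G=\langle\, K,\,x\mid xAx^{-1}=B\,\rangle
\]
whose base is the one-relator group with torsion $K=\langle y_{0},\dots,y_{\alpha}\mid (y_{\alpha}^{\beta}y_{0}^{\delta})^{\ell}\rangle\cong\langle y_{0},y_{\alpha}\mid (y_{\alpha}^{\beta}y_{0}^{\delta})^{\ell}\rangle\ast F$, with $F$ free on $y_{1},\dots,y_{\alpha-1}$, whose stable letter is $x$, and whose associated subgroups $A=\langle y_{0},\dots,y_{\alpha-1}\rangle$ and $B=\langle y_{1},\dots,y_{\alpha}\rangle$ are identified by the index shift $y_{i}\mapsto y_{i+1}$. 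By the Freiheitssatz, $A$ and $B$ are \emph{free} Magnus subgroups of $K$, and $K$ lies strictly below $G$ in Moldavanskii's complexity ordering; iterating, the hierarchy terminates at free products of a finite cyclic group with a free group, which are residually finite.

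The step I expect to be the main obstacle is the inductive one: propagating residual finiteness \emph{up} the hierarchy. Residual finiteness of the base $K$ is far from enough --- $\mathrm{BS}(2,3)$ is an HNN extension of $\Z$ over two infinite cyclic subgroups and is not residually finite --- so one must show that the free Magnus subgroups $A,B\le K$ are \emph{separable} and, moreover, \emph{compatibly so}: for every finite-index normal subgroup of $K$ there is a smaller one through which the defining isomorphism $A\to B$ descends to an isomorphism of the images, so that a residual-finiteness criterion for HNN extensions of Baumslag--Tretkoff / Shirvani type applies. To verify this compatibility I would exploit the fine structure of one-relator groups with torsion --- Newman's spelling theorem together with the Freiheitssatz --- to control the combinatorics of $K$ and of the embeddings $A,B\hookrightarrow K$ tightly enough to build the required finite quotients by hand. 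This is precisely where the four-syllable hypothesis earns its keep: it forces the Magnus subgroups that arise to have small rank, keeping the construction of finite quotients finite and case-manageable. (The two-syllable base cases $\langle a,b\mid (a^{p}b^{q})^{\ell}\rangle$ are classically known to be residually finite --- for instance $\langle a,b\mid (ab)^{\ell}\rangle\cong C_{\ell}\ast\Z$, and in general the group carries a graph-of-groups structure with infinite cyclic edge groups that yields residual finiteness by classical means --- so they may be fed in as known input.)

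Two final remarks. The scheme genuinely uses $\ell\ge 2$: both the reductions and the spelling theorem do, and indeed taking $\ell=1$ with $(\alpha,\beta,\gamma,\delta)=(1,2,-1,-3)$ gives $\langle x,y\mid xy^{2}x^{-1}y^{-3}\rangle=\mathrm{BS}(2,3)$, which is not residually finite; this is no loss here, since $\ell$ is prime. And, with hindsight, the whole statement is subsumed by Wise's theorem that every one-relator group with torsion is virtually special, hence residually finite, so one could alternatively just invoke that; the route above is the elementary, self-contained one.
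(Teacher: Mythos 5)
The paper does not actually prove this statement: it is quoted as Theorem~2 of the cited reference of Allenby and Tang and used as a black box, so there is no internal proof to compare yours against. Judged on its own terms, your proposal correctly identifies the strategy of the genuine Allenby--Tang argument --- normalise the relator, pass to a generator of exponent sum zero, realise $G$ as an HNN extension of a lower-complexity one-relator group with torsion over free Magnus subgroups, and induct down the Magnus--Moldavanskii hierarchy --- but the step you yourself flag as the main obstacle is a genuine gap, not a routine verification. Showing that the finite quotients of the base $K$ can be chosen compatibly with the associated isomorphism $A \to B$ (the Baumslag--Tretkoff hypothesis) is precisely the substance of the Allenby--Tang paper; announcing that you would ``build the required finite quotients by hand'' via Newman's spelling theorem and the Freiheitssatz describes the goal rather than achieving it, and the claim that the four-syllable hypothesis keeps the construction ``case-manageable'' is asserted, not demonstrated. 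As written, the hierarchy argument is a plan, not a proof.

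Two of your side remarks are nonetheless correct and valuable. First, the fallback via Wise's theorem (one-relator groups with torsion are virtually special, hence residually finite) is a complete, if heavy, proof for $\ell \ge 2$, which is the only case the paper needs since $\ell$ is prime; citing Wise instead of Allenby--Tang closes the argument. Second, your observation that the statement is false as literally written is important: with $\ell = 1$ and $(\alpha,\beta,\gamma,\delta) = (1,2,-1,-3)$ one obtains $\mathrm{BS}(2,3)$, which is not residually finite, so the hypothesis should read $\ell \ge 2$ (as in the original reference, where the relator is a proper power); the paper's ``$\ell$ is a positive integer'' is an overstatement, harmless only because of how the theorem is applied. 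Finally, note that the single instance the paper actually uses, $B_\ell = \langle a,b \mid (ab^{-1})^{\ell}\rangle$, is degenerate: the Nielsen transformation $c = ab^{-1}$ exhibits it as $C_\ell * \mathbb{Z}$, so no one-relator machinery is needed there at all --- a point your treatment of the two-syllable base cases essentially already contains.
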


We are now ready to dive into the bulk of \cref{sec:finiteIndex}.
\begin{proof}[Proof of \cref{lemma:finiteIndexNormal}]
    We start by building a finite index normal free subgroup\footnote{We point out that every amalgamated free product whose factors are finite contains a finite index free normal subgroup that can be explicitly constructed: see, for instance,~\cite[Theorem~I.7.4, Proposition~I.7.10, Theorem~IV.1.6 and Corollary~IV.1.9]{DicksDunwoodyGroups}.} of $G$.  We consider the group homomorphism $\varphi: G \to \mathrm{Sym}(8)$
    defined on the generators by
    \[ a^\varphi = (1 \,2 \, 3 \, 4)(5 \,8 \,7 \,6), \quad b^\varphi = (1\,2)(3\,4)(5\,6)(7\,8), \quad z^\varphi = (1\,2)(3\,4)(5\,8)(6\,7) \,.\]
    Observe that the restrictions of $\varphi$ to $D_4 = \langle a,b\rangle$ and to $C_2\times C_2 = \langle ab, z\rangle$ are isomorphisms. Therefore,~\cite[Proposition~I.7.10]{DicksDunwoodyGroups} implies that $\ker(\varphi)$ is a free group. By the first isomorphic theorem, its index in $G$ is $16$, and thus, by~\cite[Remark~IV.1.11]{DicksDunwoodyGroups}, the rank of $\ker(\varphi)$ is $3$. To find a minimal set generators of $\ker(\varphi)$ we implement $\varphi$ in \textsf{GAP}, and we computed
    \[ \ker(\varphi) = \langle  za^2za^2, zaza, za^3za^3 \rangle \,.\]
    From here on, we identify $\ker(\varphi)$ with $F_3 = \langle x_1,x_2,x_3 \rangle$ via the correspondence
    \[ x_1 \mapsto za^2za^2, \quad x_2 \mapsto zaza, \quad x_3 \mapsto za^3za^3 \,, \]
    and we set $S = \{x_1,x_2,x_3\}$.
    
    Let $H = \langle x_{1}^\ell\rangle^G \le F_3$ be the normal closure of $ \langle x_{1}^\ell\rangle$ in $G$. Note that $G$ embeds in the automorphism group of $F_3$. As a consequence, the conjugacy classes of $x_1^{\ell}=(za^2)^{ 2\ell}$ in $G$ can be described as the orbit of $(za^2)^{2\ell}$ under the action of a prescribed subgroup of Nielsen transformations. Elementary computations show that
    \begin{gather*}
        x_1^a = x_2^{-1}x_3 \,, \quad (x_2^{-1}x_3)^a = x_1^{-1} \,, \quad x_1^b = x_3^{-1}x_2 \,,
        \\
        (x_2^{-1}x_3)^b = x_1 \,, \quad x_1^z=x_1^{-1}\,, \quad (x_2^{-1}x_3)^z = x_3^{-1}x_2 \,.
    \end{gather*}
    This facts can be summarized in the following diagram.
    \[\begin{tikzcd}
	   && {x_1} \\
	   \\
	   {x_2^{-1}x_3} &&&& {x_3^{-1}x_2} \\
	   \\
	   && {x_1^{-1}}
	   \arrow["a"', curve={height=-12pt}, from=1-3, to=3-1]
	   \arrow["b", curve={height=-12pt}, tail reversed, from=1-3, to=3-5]
	   \arrow["z", curve={height=-6pt}, tail reversed, from=3-1, to=3-5]
    	\arrow["a"', curve={height=-12pt}, from=3-1, to=5-3]
    	\arrow["b"', curve={height=18pt}, tail reversed, from=3-1, to=5-3]
    	\arrow["a"', curve={height=-12pt}, from=3-5, to=1-3]
    	\arrow["z"', curve={height=6pt}, tail reversed, from=5-3, to=1-3]
    	\arrow["a"', curve={height=-12pt}, from=5-3, to=3-5]
    \end{tikzcd}\]
    In particular, we can conclude that
    \[ \left( x_1^{\ell} \right)^G = \left\{ x_1^{\pm \ell},\,(x_2^{-1}x_3)^{\pm \ell} \right\} ^{F_3} \,\]
    and thus
    \[ H = \langle x_1^{\pm \ell} \rangle ^G = \langle x_1^{\ell},\, (x_2^{-1}x_3)^{\pm \ell} \rangle ^{F_3} \,.\]

   In order to apply~\Cref{lem: res fin}, we shall prove that $G/ H$ is residually finite. For that purpose, observe that $G/H$ is a finite extension of 
    \[  F_3 / H = \langle x_1, \, x_2, \, x_3 \mid x_1^\ell, \, (x_2 x_3^{-1})^\ell  \rangle \cong C_\ell \ast B_\ell  \,,\]
    where $C_\ell$ is a cyclic group of order $\ell$, and $B_\ell = \langle a, b \mid (ab^{-1})^\ell \rangle$. Hence, it is enough to prove that $C_\ell \ast B_\ell$ is itself residually finite.\footnote{Let $G$ be a virtually residually finite group, and let $H$ a finite index normal subgroup of $G$ which is residually finite. If $g \in G - H$, then $g$ has a nontrivial image in the canonical epimorphism $G \to G/H$. If $g \in H$, let $\varphi : H \to F$ be an homomorphism such that $g$ is not in $\ker(\varphi)$, and let $K$ be the normal core of $\ker(\varphi)$ in $G$. Observe that
    \[|G:K| \le |G: \ker(\varphi)| !. \]
    and hence it is finite. In particular, $G/K$ is a finite group. It follows that $g$ has a nontrivial image in the canonical epimorphism $G \to G/K$. This proves that a virtually residually finite group is residually finite, and, as every free group is residually finite, that a virtually free group is residually finite.} Indeed, $C_\ell$ is finite, $G_\ell$ is residually finite by Theorem~\ref{thm: AT}, and the free product of residually finite groups is also residually finite. Therefore, \cref{lem: res fin} implies the existence of a finite index normal subgroup $K_\ell \leq G$ such that 
    \[ H \cap \B_S(\ell) = K_\ell \cap \B_S(\ell). \]

    To conclude the proof, we need to verify how $K_\ell$ intersects with the balls $\B_\ast (k)$ of $G$.  We start by establishing the elements contained in $K_\ell \cap \B_S(\ell)$, and to do so we leverage on the previous identity. By \Cref{lem: powers 1}, for every element $w \in H$, $\len(w) \geq \ell$, and, if $\len(w) = \ell$, then $w = x_1^{\pm \ell}$.  Accordingly, we conclude that
    \[ \B_S(\ell) \cap K_\ell = \B_S(\ell) \cap H = \{1,\, x_1^{\pm \ell} \} \,.\]
    It follows at once that
    \[ \B_\ast(4\ell-1) \cap K_\ell = \{ 1 \} \quad \hbox{and} \quad \B_\ast(4\ell) \cap K_\ell = \{ 1, \, (za^2)^{\pm 2\ell} \} \,.\]
    Finally, by eventually substituting $K_\ell$ for $K_\ell \cap F_3$, we observe that every element in $K_\ell$ contains an even number of symbols $z$, and thus
    \[ \B_\ast(4\ell) \cap K_\ell = \B_\ast(4\ell+1) \cap K_\ell = \{ 1, \, (za^2)^{\pm 2\ell} \} \,.\]
    Hence, $K_\ell$ enjoys all the properties required. This concludes the proof. 
\end{proof}

\section{Effective bound for primes}
\label{sec: effective}
To obtain an effective bound on the number of vertices of the graphs $\Gamma_\ell$, it is necessary to control the index of the normal subgroup of prescribed girth appearing in \Cref{lemma:finiteIndexNormal}. Since the problem reduces to free groups, we study the girth of certain verbal subgroup of the free group.

We start with the girth of the $\ell$-power subgroup
\[ F_k^\ell = \langle g^\ell \mid g \in F_k \rangle.\]
The following result seems to be of   elementary nature within geometric group theory, yet we have not been able to locate a reference in the existing literature. We include a proof in the case where $\ell$ is prime. Furthermore, asymptotically for large $\ell$, the same result can be derived by standard arguments from Small Cancellation Theory.

\begin{proposition} \label{thm: p-power}
    Let $F_k = \langle x_1, \dots, x_k \rangle $ be the free group of rank $k$, and let $\ell$ be a prime number. 
    Then, $\len(w) \geq \ell$ for every $w \in F_k^\ell$, and equality is attained if, and only if, $w = x_{i}^{\pm \ell}$, for some index $i \in \{ 1, \dots, k \}$.
\end{proposition}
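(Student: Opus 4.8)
The plan is to separate the inequality $\len(w)\ge\ell$ from the equality analysis. For the inequality I would use the Magnus embedding over $\F_\ell$ to see that every element of $F_k^\ell$ is a ``long'' word, while for the equality case a bare-hands abelianisation count suffices. Concretely, let $A=\F_\ell\langle\langle X_1,\dots,X_k\rangle\rangle$ be the ring of formal power series in $k$ non-commuting variables over $\F_\ell$, let $I\trianglelefteq A$ be the augmentation ideal generated by $X_1,\dots,X_k$, and let $\mu\colon F_k\to 1+I$ be the Magnus homomorphism $x_i\mapsto 1+X_i$. I would establish two facts: \emph{(I)} $\mu(F_k^\ell)\subseteq 1+I^\ell$, and \emph{(II)} for every reduced word $w\ne 1$ of length $n$ the series $\mu(w)-1$ has a nonzero homogeneous component of degree $\le n$. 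Granting these, if $w\in F_k^\ell\setminus\{1\}$ has length $n$, then by (II) $\mu(w)-1$ is nonzero and has a component of some degree $d\le n$, while by (I) every component of $\mu(w)-1$ has degree $\ge\ell$; hence $\len(w)=n\ge d\ge\ell$.

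\textbf{Proof of (I).} Since $F_k^\ell$ is generated by the elements $g^\ell$ and $1+I^\ell$ is a subgroup of $A^\times$ (it is closed under products and under taking formal power-series inverses), it is enough to check $\mu(g^\ell)\in 1+I^\ell$ for a single $g$. Writing $\mu(g)=1+u$ with $u\in I$, the element $u$ commutes with itself, so the binomial expansion of $\mu(g)^\ell=(1+u)^\ell$ takes place inside the commutative subring $\F_\ell[[u]]$ and equals $\sum_{j=0}^{\ell}\binom{\ell}{j}u^j=1+u^\ell$, because $\ell\mid\binom{\ell}{j}$ for $0<j<\ell$ --- this is the only point where primality of $\ell$ is genuinely needed. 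Since $u\in I$ forces $u^\ell\in I^\ell$, we get $\mu(g^\ell)\in 1+I^\ell$, as wanted. (This is exactly the inclusion of $F_k^\ell$ in the $\ell$-th dimension subgroup $D_\ell(F_k)$.)

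\textbf{The main obstacle: (II).} This is the assertion that $D_{n+1}(F_k)=\mu^{-1}(1+I^{n+1})$ meets the ball of radius $n$ only in the identity, equivalently that $\len(w)$ is at least the ``$\mu$-depth'' of $w$. I would prove it by induction on $n$, peeling off the last letter $w=w'x_{i_n}^{\varepsilon_n}$ and expanding $\mu(w)-1=(\mu(w')-1)+\eta+(\mu(w')-1)\eta$, where $\eta=\mu(x_{i_n}^{\varepsilon_n})-1$ involves only $X_{i_n}$ and lies in $I$. The degree-$1$ component of $\mu(w)-1$ is the image of $w$ in $F_k^{\mathrm{ab}}\otimes\F_\ell$, which disposes of every $w$ of nontrivial mod-$\ell$ abelianisation at once; the remaining cases --- where $w$ lies deep in the lower central series mod $\ell$ --- are handled by feeding the inductive hypothesis through this expansion. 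The delicate sub-case is the one-generator one, $w=x_i^{\pm m}$: here $\mu(w)-1=\sum_{j\ge1}\binom{m}{j}X_i^{j}$, and Kummer's theorem shows the least $j\ge1$ with $\binom{m}{j}\not\equiv 0\pmod\ell$ is $\ell^{v_\ell(m)}\le m=\len(w)$. I expect verifying (II) cleanly to be the bulk of the work; alternatively it can be quoted from the classical theory of dimension subgroups of free groups.

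\textbf{The equality case.} Suppose $w\in F_k^\ell$, $w\ne 1$ and $\len(w)=\ell$. The image of $F_k^\ell$ in $F_k^{\mathrm{ab}}=\Z^k$ is $\ell\Z^k$, so every exponent sum $m_i$ of $w$ is divisible by $\ell$. If some $m_i\ne 0$ then $|m_i|\ge\ell=\len(w)$, forcing every letter of $w$ to be $x_i^{\pm1}$ and, since their signed total has absolute value $\ell$, all of the same sign; hence $w=x_i^{\pm\ell}$. If instead all $m_i=0$ then $w\in\gamma_2(F_k)$, so $\len(w)$ has the parity of the total exponent sum, namely even, which contradicts $\len(w)=\ell$ when $\ell$ is odd; and for $\ell=2$ one has $\gamma_2(F_k)\le F_k^2$, so $F_k/F_k^2\cong\F_2^{k}$ and a direct inspection shows the only length-$2$ element of $F_k^2$ is $x_i^{\pm2}$ (indeed for $\ell=2$ the whole statement collapses to this abelianisation computation). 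Conversely $x_i^{\pm\ell}\in F_k^\ell$ has length exactly $\ell$, which completes the argument.
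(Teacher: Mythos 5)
Your overall strategy coincides with the paper's: both push $F_k$ into $\F_\ell\langle\!\langle X_1,\dots,X_k\rangle\!\rangle$ via the Magnus map, observe that $\ell$-th powers land in $1+I^\ell$ (Freshman's dream), and then argue that short reduced words cannot land there. Your step (I) is exactly the paper's proof that $G=\M_\ell(F_k)/\mathcal{I}_\ell$ has exponent $\ell$, and your treatment of the equality case (split on $w\in\gamma_2(F_k)$, use abelianisation, handle $\ell=2$ separately by parity) is essentially identical to the paper's.

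The genuine gap is step (II), which you explicitly flag as ``the bulk of the work'' and leave as an inductive sketch that peels off the last letter; you do not show that the candidate low-degree component survives cancellation, and the passage ``handled by feeding the inductive hypothesis through this expansion'' is where a proof would actually live. Moreover, (II) as you state it is stronger than what is needed and would itself require the Kummer/Lucas refinement you only mention in the one-generator case. The paper closes exactly this gap with a short, explicit coefficient computation: writing $w=x_{i_1}^{\alpha_1}\cdots x_{i_m}^{\alpha_m}$ in syllable form, the coefficient of the monomial $X_{i_1}\cdots X_{i_m}$ in $\M(w)$ equals $\varepsilon(\partial_{i_1}\cdots\partial_{i_m}(w))=\prod_{r}\alpha_r$, by Fox calculus. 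Since $\len(w)<\ell$ forces every $|\alpha_r|<\ell$, primality of $\ell$ makes $\prod_r\alpha_r$ a unit in $\F_\ell$, and $\deg(X_{i_1}\cdots X_{i_m})=m\le\len(w)<\ell$, giving $\M_\ell(w)\notin\mathcal{I}_\ell$ directly. This is exactly the range $n<\ell$ that the proposition requires, avoids any induction or appeal to dimension-subgroup theory, and also shows that primality is used not only in step (I) as you claim, but equally in making $\prod_r\alpha_r$ invertible. To repair your write-up, replace the inductive sketch for (II) with this direct coefficient computation (restricted to $n<\ell$).
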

\begin{proof}
We aim to prove that there exists a $k$-generated group $G$ of exponent $\ell$ such that any nontrivial element $w \in \B(\ell-1)$ is not a law. It follows that there exists a suitable normal subgroup $N \unlhd F_k$ such that \( F_k / N \cong G\), $F_k^\ell \leq N$ and $\B(\ell -1) \cap N = \{1\}$.

Let $w = x_{i_1}^{\alpha_1} \dots x_{i_n}^{\alpha_n}$ be a nontrivial reduced word, for some $i_j \in \{ 1, \dots, k \}$ and $\alpha_j \in \Z$. Assume that $\len(w) < \ell$, so that $|\alpha_i| < \ell$ for all $i$. Consider the Magnus embedding
\[
\mathcal{M}\colon F_k\to \mathbb{Z}\langle\!\langle X_1,\dots,X_k\rangle\!\rangle,
\qquad  x_i \mapsto 1+X_i,
\]
into the $\mathbb{Z}$-algebra of noncommutative formal power series in $k$ variables (see~\cite[Proposition 10.1]{LyndonSchuppCombinatorial}). As in~\cite[Section~2.2]{SuWa}, the coefficient $c_W$ of the monomial $W = X_{i_1} \dots X_{i_n}$ in $\M(w) $ is given by 
\[ \varepsilon(\partial_{i_1} \dots \partial_{i_n}(w)) \,, \]
where $\partial_j$ is the Fox derivative $\partial_j \colon \Z[F_k] \rightarrow \Z[F_k]$ defined as $\partial_j(x_i) = \delta_{ij}$ and $\varepsilon \colon \mathbb{Z}[F_k] \rightarrow \Z$ is the augmentation map (see~\cite[Chapter I.10]{LyndonSchuppCombinatorial}). Moreover,~\cite[Equation (4)]{MaPu} implies
\[ c_W = \varepsilon(\partial_{i_1} \dots \partial_{i_n}(w)) = \prod_{i=1}^n \alpha_i \,.\]

Let $\F_\ell$ be the finite field of $\ell$ elements, let
\( \M_\ell \colon F_k \rightarrow \F_\ell \langle \langle X_1,\dots,X_k \rangle \rangle\) be the Magnus embedding where the coefficients are reduced modulo $\ell$,
and let 
\[ \mathcal{I}_\ell = \left\{ f \in \F_\ell \langle \langle X_1,\dots,X_k \rangle \rangle \mid \mathrm{ord}(f) \geq \ell  \right\} \footnote{Here, $\mathrm{ord}(f)$ is the \emph{order} of $f$, {\it i.e.}, the lowest degree of a monomial appearing in the power series $f$.}\]
({\it cf.}~\cite[Section 3.1]{Zozaya}).
We observe that
\[\deg(W) \leq \len(w) < \ell \,,\]
and, as $|\alpha_i| < \ell$ for every $i\in\{1,\dots,k\}$, $c_W$ and $\ell$ are coprime.
We obtain that \[\M_\ell(w) \notin \mathcal{I}_\ell \,.\]

We now define $G = \M_\ell(F_k) / \mathcal{I}_\ell$.
To complete the proof, we will show that $G$ is a group of exponent $\ell$. For every $g \in F_k$,
\[\M_\ell(g^\ell) = \M_\ell(g)^\ell = (1 + f)^\ell = 1+ f^\ell \,,\]
for some power series $f$ such that $f(0, \dots, 0) = 0$. Write $f$ as the sum of monomials $\sum f_i$, where $\deg f_i \geq 1$, then
\[\deg(f_1 \dots f_{\ell}) \geq \sum_{i=1}^{\ell} \deg(f_i) \geq \ell \,.\]
Thus, $f^\ell \in \mathcal{I}_\ell$. This proves that $G$ has exponent $\ell$. 

We now characterize equality. Suppose that $w \in F_k^\ell$ satisfies $\len(w) = \ell$. If $\ell$ is odd, then $w \notin \gamma_2(F_k) = [F_k,F_k]$. Consider the abelianisation map $\pi \colon F_k \to \mathbb{Z}^k$.  
If $w \in F_k^\ell - \gamma_2(F_k)$, then $w^\pi \in \ell \mathbb{Z}^k - \{0 \}$, and hence
\[
    \ell = \len (w) \geq \len_{\{x_1^\pi, \dots,x_k^\pi\}}(w^\pi) \geq \ell.
\]
Therefore, $w^\ell = \pm \ell \,x_i^\pi$ for some $i \in \{1, \dots, k\}$. Hence, $w = x_i^{\pm \ell}$. Finally, if $\ell = 2$, there exists no element $w\in F_k$ with $\len(w) = 2$ that lies in $\gamma_2(F_k)$, and thus the same argument applies.  
\end{proof}

We are ready to prove \cref{cor: p-power-nilpotent}. Moreover, we remark that this result cannot be obtained from elementary considerations of Small Cancellation Theory.
\begin{proof}[Proof of \cref{cor: p-power-nilpotent}]
Keeping the notation of the preceding proof, for every $w \in \gamma_\ell(F_k)$, we have $\M(w) \in \mathcal{I}_\ell$ (see~\cite[Proposition~10.2]{LyndonSchuppCombinatorial}). Hence, the group $G$ is $\ell$-step nilpotent, and thus there exists a normal subgroup $N \unlhd F_k$ such that $F_k^\ell \gamma_\ell(F_k) \leq N$ and $\B(\ell-1) \cap N=\{1 \}$.

To establish equality, we observe again that if $w \in F_k^{\ell} \gamma_\ell(F_k)$ and $\len(w) = \ell$, then $w \notin \gamma_2(F_k)$. From here, we can conclude as in the previous proof. 
\end{proof}

\begin{proof}[Proof of \Cref{prop:bounds}]
Following the proof of \Cref{lemma:finiteIndexNormal}, let $F_3 \unlhd G$ be the finite index normal free subgroup of rank $3$ such that $|G: F_3| = 16$, and let 
\[  x_1 = za^2za^2, \, \quad x_2 = zaza, \, \quad x_3 = za^3za^3 \]
be the canonical generators for $F_3$.
Recall from the same proof that 
we have that
\[ \left( x_1^{\ell} \right)^G = \left\{ x_1^{\pm \ell},\,(x_2^{-1}x_3)^{\pm \ell} \right\} ^{F_3}. \]
From here, the proofs diverge, as here we are building an explicit finite index subgroup $N \leq F$ that is normal in $G$, and such that
\[\B_S(\ell) \cap N = \left\{ 1, x_{1}^{\pm \ell} \right\} \,.\]

Define $N=F_3^{2 \ell} \gamma_{\ell}(F_3) \leq F_3^\ell \gamma_\ell(F_3)$. Observe that, as $N$ is a verbal subgroup of $F$, it is characteristic in $F$, and hence normal in $G$. Let
\[ M = \langle x_1^\ell, N\rangle^G = \langle x_1^\ell, (x_2 x_3^{-1})^\ell, N \rangle  \leq F_3^{\ell}\gamma_{\ell}(F_3).\]
On the one hand, \Cref{cor: p-power-nilpotent} states that $\len(w) \geq \ell$ for every $w \in M$. On the other hand, if $\len(w) = \ell$, then $w=x_i^{\pm \ell}$ for some $i \in \{1,2,3\}$. However, we claim that $x_2^\ell$ and $x_3^\ell$ are not elements of $M$. Aiming for a contradiction, suppose that $x_2^\ell \in M$, that is,
\begin{equation} \label{eq:ekis2}
    x_2^\ell = x_1^{g_1 \ell} (x_2 x_{3}^{-1})^{h_1 \ell} \dots x_1^{g_m \ell} (x_2 x_3^{-1})^{h_m \ell} u v \,,
\end{equation}
with $m \geq 1$, $g_i, h_i \in F_3$, $u \in F_3^{2 \ell}$ and $v \in \gamma_\ell(F_3)$.
Consider the evaluation epimorphism $\varphi: F_3 \rightarrow \mathbb{Z}$ defined on the generators by
\[ x_1 \mapsto 1 \,, x_2 \mapsto x_2, \, x_3 \mapsto x_2 \,.\]
Upon applying $\varphi$ to \cref{eq:ekis2}, we obtain
\[ x_2^\ell = (x_2^\ell)^\varphi = \left(x_1^{g_1 \ell} (x_2 x_{3}^{-1})^{h_1 \ell} \dots x_1^{g_m \ell} (x_2 x_3^{-1})^{h_m \ell} \right)^ \varphi u^\varphi v^\varphi = u^\varphi \in \left\langle x_2^{2\ell} \right \rangle \,,\]
a contradiction. Since $x_2^\ell$ is not an element of $N^\varphi$, the same is true for $N$ itself. The fact that $x_3^\ell$ is not an element of $N$ can be proved in a similar fashion. 

To conclude, we observe that
\[|V\Gamma_\ell| = |G : M| \leq |G: N| = 16 |F_3 : F_3^{2\ell} \gamma_\ell(F_3)| = 16 \cdot N(3,2\ell) \,. \qedhere\]
\end{proof}

\bibliographystyle{plain}
\bibliography{bibFlexible}

\begin{thebibliography}{10}

\bibitem{AlbertsonCollin}
M.~O. Albertson and K.~L. Collins.
\newblock Symmetry breaking in graphs.
\newblock {\em Electron. J. Comb.}, 3(1):R18, 1996.

\bibitem{AT}
R.~B.~J.~T. Allenby and C.~Y. Tang.
\newblock Residual finiteness of certain {1}-relator groups: extensions of results of {G}ilbert {Baumslag}.
\newblock {\em Math. Proc. Camb. Philos. Soc.}, 97(2):225--230, 1985.

\bibitem{BarbieriGrazianSpiga2025}
M.~Barbieri, V.~Grazian, and P.~Spiga.
\newblock On the order of semiregular automorphisms of cubic vertex-transitive graphs.
\newblock {\em Eur. J. Comb.}, 124:104091, 2025.

\bibitem{Boutin}
D.~L. Boutin.
\newblock Small label classes in $2$-distinguishing labelings.
\newblock {\em Ars Math. Contemp.}, 1(2):154--164, 2008.

\bibitem{CeSi}
T.~Ceccherini-Silberstein and M.~D'Adderio.
\newblock {\em Topics in groups and geometry. {Growth}, amenability, and random walks}.
\newblock Springer Monogr. Math. Cham: Springer, 2021.

\bibitem{DicksDunwoodyGroups}
W.~Dicks and M.~J. Dunwoody.
\newblock {\em Groups Acting on Graphs}, volume~17 of {\em Cambridge Studies in Advanced Mathematics}.
\newblock Cambridge University Press, Cambridge, 1989.

\bibitem{Ademir}
T.~Dobson, A.~Hujdurovi\'c, W.~Imrich, and R.~Ortner.
\newblock On cubic vertex-transitive graphs of given girth.
\newblock {\em Art Discrete Appl. Math.}, 8:P1.8, 2025.

\bibitem{EibenJajcaySparl}
E.~Eiben, R.~Jajcay, and P.~\v{S}parl.
\newblock Symmetry properties of generalized graph truncations.
\newblock {\em J. Comb. Theory, Ser. B}, 137:291--315, 2019.

\bibitem{Rhys}
R.~J. Evans, A.~Montero, and P.~Poto{\v c}nik.
\newblock \texttt{GraphSym}, graphs with symmetries library, {V}ersion 0.1.
\newblock \href {https://rhysje00.github.io/graphsym} {\texttt{https://rhysje00.github.io/}\discretionary {}{}{}\texttt{graphsym}}, Apr 2025.
\newblock GAP package.

\bibitem{HIKST}
S.~Hüning, J.~Kloas W.~Imrich, H.~Schreiber, and T.W. Tucker.
\newblock Distinguishing graphs of maximum valence $3$.
\newblock {\em Electron. J. Comb.}, 26(4):P4.36, 2019.

\bibitem{ILTW}
W.~Imrich, T.~Lachmann, T.~W. Tucker, and G.~M. Wiegel.
\newblock Finite and infinite vertex-transitive cubic graphs and their distinguishing cost and density.
\newblock {\em Art Discrete Appl. Math.}, 5(3):P3.15, 2022.

\bibitem{JS11}
R.~Jajcay and J.~{\v{S}}ir{\'a}{\v{n}}.
\newblock Small vertex-transitive graphs of given degree and girth.
\newblock {\em Ars Math. Contemp.}, 4(2):375--384, 2011.

\bibitem{arc-transitive}
S.~Jendrol', R.~Nedela, and M.~{\v{S}}koviera.
\newblock Constructing regular maps and graphs from planar quotients.
\newblock {\em Math. Slovaca}, 47(2):155--170, 1997.

\bibitem{LyndonSchuppCombinatorial}
R.~C. Lyndon and P.~E. Schupp.
\newblock {\em Combinatorial Group Theory}.
\newblock Classics in Mathematics. Springer, Berlin, 2001.
\newblock Reprint of the 1977 edition.

\bibitem{MaPu}
J.~Malestein and A.~Putman.
\newblock On the self-intersections of curves deep in the lower central series of a surface group.
\newblock {\em Geom. Dedicata}, 149:73--84, 2010.

\bibitem{MaSc}
D.~Maru{\v{s}}i{\v{c}} and R.~Scapellato.
\newblock Permutation groups, vertex-transitive digraphs and semiregular automorphisms.
\newblock {\em Eur. J. Comb.}, 19(6):707--712, 1998.

\bibitem{PS21}
P.~Poto{\v{c}}nik and P.~Spiga.
\newblock On the number of fixed points of automorphisms of vertex-transitive graphs.
\newblock {\em Combinatorica}, 41(5):703--747, 2021.

\bibitem{PSV15}
P.~Poto{\v{c}}nik, P.~Spiga, and G.~Verret.
\newblock Bounding the order of the vertex-stabiliser in 3-valent vertex-transitive and 4-valent arc-transitive graphs.
\newblock {\em J. Comb. Theory, Ser. B}, 111:148--180, 2015.

\bibitem{PTV24}
P.~Poto{\v{c}}nik, M.~Toledo, and G.~Verret.
\newblock On orders of automorphisms of vertex-transitive graphs.
\newblock {\em J. Comb. Theory, Ser. B}, 166:123--153, 2024.

\bibitem{PotocnikSpigaVerret2013}
P.~Potočnik, P.~Spiga, and G.~Verret.
\newblock Cubic vertex-transitive graphs on up to {$1\,280$} vertices.
\newblock {\em J. Symb. Comput.}, 50:465--477, 2013.

\bibitem{VidaliPotocnik}
P.~Potočnik and J.~Vidali.
\newblock Cubic vertex-transitive graphs of girth six.
\newblock {\em Discrete Math.}, 345(3):112734, 2022.

\bibitem{Sabidussi}
G.~Sabidussi.
\newblock On a class of fixed-point-free graphs.
\newblock {\em Proc. Am. Math. Soc.}, 9:800--804, 1958.

\bibitem{SerreTrees}
J.-P. Serre.
\newblock {\em Trees}.
\newblock Springer, Berlin-Heidelberg, 1980.
\newblock Original French edition published by the Société Mathématique France in 1977.

\bibitem{Spiga2014}
P.~Spiga.
\newblock Semiregular elements in cubic vertex-transitive graphs and the restricted {B}urnside problem.
\newblock {\em Math. Proc. Camb. Philos. Soc.}, 157(1):45--61, 2014.

\bibitem{SuWa}
A.~I. Suciu and H.~Wang.
\newblock Cup products, lower central series, and holonomy {Lie} algebras.
\newblock {\em J. Pure Appl. Algebra}, 223(8):3359--3385, 2019.

\bibitem{Tutte}
W.~T. Tutte.
\newblock On a family of cubical graphs.
\newblock {\em Math. Proc. Camb. Philos. Soc.}, 43(4):459--474, 1947.

\bibitem{Zozaya}
A.~Zozaya.
\newblock A remark on {Ado}'s theorem for principal ideal domains.
\newblock {\em J. Lie Theory}, 34(3):531--540, 2024.

\end{thebibliography}

\end{document}